\begin{document}

\theoremstyle{plain}

\newtheorem{thm}{Theorem}[section]
\newtheorem{lem}[thm]{Lemma}
\newtheorem{conj}[thm]{Conjecture}
\newtheorem{pro}[thm]{Proposition}
\newtheorem{cor}[thm]{Corollary}
\newtheorem{que}[thm]{Question}
\newtheorem{rem}[thm]{Remark}
\newtheorem{defi}[thm]{Definition}
\newtheorem{hyp}[thm]{Hypothesis}

\newtheorem*{thmA}{THEOREM A}
\newtheorem*{thmB}{THEOREM B}
\newtheorem*{conjA}{CONJECTURE A}
\newtheorem*{conjB}{CONJECTURE B}

\newtheorem*{thmAcl}{Main Theorem$^{*}$}
\newtheorem*{thmBcl}{Theorem B$^{*}$}

\def\irrp#1{{\rm Irr}_{p'}(#1)}

\def\Z{{\mathbb Z}}
\def\C{{\mathbb C}}
\def\Q{{\mathbb Q}}
\def\irr#1{{\rm Irr}(#1)}
\def\ibr#1{{\rm IBr}(#1)}
\def\irra#1{{\rm Irr}_{\rm A}(#1)}
\def\ibra#1{{\rm IBr}_{\rm A}(#1)}
\def \c#1{{\cal #1}}
\def\cent#1#2{{\bf C}_{#1}(#2)}
\def\syl#1#2{{\rm Syl}_#1(#2)}
\def\nor{\trianglelefteq\,}
\def\oh#1#2{{\bf O}_{#1}(#2)}
\def\Oh#1#2{{\bf O}^{#1}(#2)}
\def\zent#1{{\bf Z}(#1)}
\def\det#1{{\rm det}(#1)}
\def\ker#1{{\rm ker}(#1)}
\def\norm#1#2{{\bf N}_{#1}(#2)}
\def\alt#1{{\rm Alt}(#1)}
\def\iitem#1{\goodbreak\par\noindent{\bf #1}}
   \def \mod#1{\, {\rm mod} \, #1 \, }
\def\sbs{\subseteq}

\def\gc{{\bf GC}}
\def\ngc{{non-{\bf GC}}}
\def\ngcs{{non-{\bf GC}$^*$}}
\newcommand{\notd}{{\!\not{|}}}
 \renewcommand{\thefootnote}{\fnsymbol{footnote}}
\footnotesep6.5pt

\title{Character Values of $p$-solvable groups on  picky elements}

\author{Alexander Moret\'o}
\address{Departament de Matem\`atiques, Universitat de Val\`encia, 46100 Burjassot,
Val\`encia, Spain}
\email{alexander.moreto@uv.es}

\author{Gabriel Navarro}
\address{Departament de Matem\`atiques, Universitat de Val\`encia, 46100 Burjassot,
Val\`encia, Spain}
\email{gabriel@uv.es}

\author{Noelia Rizo}
\address{Departament de Matem\`atiques, Universitat de Val\`encia, 46100 Burjassot,
Val\`encia, Spain}
\email{noelia.rizo@uv.es}

\thanks{This research is partially supported by Ministerio de Ciencia e Innovaci\'on (Grant PID2022-137612NB-I00 funded by MCIN/AEI/10.13039/501100011033 and ``ERDF A way of making Europe")   and CIDEIG/2022/29 funded by Generalitat Valenciana.}

\keywords{Character correspondences,  $p$-solvable group, picky $p$-elements}

\subjclass[2010]{Primary 20C15}

\begin{abstract}
A new conjecture on characters of finite groups, related to the McKay conjecture, was proposed recently by the first and third authors. In this paper, we prove it for $p$-solvable groups when $p$ is odd. 
 \end{abstract}

\maketitle

\section{Introduction}  

A new conjecture on characters of finite groups  was proposed recently by the first and the third authors \cite{MR}.
The simplest form of this conjecture asserts the following:
if $G$ is a finite group, $p$ is a prime, and $x \in G$ is a $p$-element which lies in a unique
Sylow $p$-subgroup $P$ of $G$ (we say that $x$ is {\it picky} in $G$), then there is a bijection ${\rm Irr}^x(G) \rightarrow {\rm Irr}^x(\norm GP)$
satisfying certain strong conditions, where ${\rm Irr}^x(G)$ is the set of the irreducible complex characters $\chi$ of $G$ 
such that $\chi(x) \ne 0$. These {\it picky} elements are shown to be fairly abundant in \cite{MMM}. The purpose of this paper is to prove the following strong form of the conjecture for $p$-solvable groups for odd primes. Namely:

\begin{thmA}
Let $G$ be a finite $p$-solvable group, where $p$ is an odd prime, and let $x \in G$ be a $p$-element which lies in a unique
Sylow $p$-subgroup $P$ of $G$. Then  there is a bijection $^*: {\rm Irr}^x(G) \rightarrow {\rm Irr}^x(\norm GP)$
and a sign $\epsilon_x$ such that $\chi(x)=\epsilon_x \chi^*(x)$ and $\chi(1)_p=\chi^*(1)_p$ for all 
$\chi \in {\rm Irr}^x(G)$.
\end{thmA}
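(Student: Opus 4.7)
The plan is to proceed by induction on $|G|$, following the standard Clifford-theoretic reduction strategy for McKay-type statements in $p$-solvable groups, and to finish off the reduced case using the odd-$p$ canonical correspondences of Glauberman--Isaacs / Isaacs.

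First I would reduce to the case $\oh{p'}{G}=1$. Let $K=\oh{p'}{G}$ and, given $\chi\in{\rm Irr}^x(G)$, let $\theta$ be an irreducible constituent of $\chi_K$ with inertia group $T=G_\theta$. After a Frattini-type argument I may arrange $x\in P\le T$, and then $x$ is picky in $T$, because any Sylow $p$-subgroup of $T$ containing $x$ is contained in a Sylow of $G$ containing $x$, which must be $P$. The Fong--Reynolds correspondence then gives a bijection between ${\rm Irr}(G\mid\theta)$ and ${\rm Irr}(T\mid\theta)$ via induction $\psi\mapsto\psi^G$. Since $\cent{G}{x}\le\norm GP$ (picky hypothesis), Mackey's formula for $\psi^G(x)$ collapses to essentially a single term, allowing me to match values at $x$ up to a sign and $[G:T]_{p'}$; the $p$-parts of degrees are compatible because $P\le T$. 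If $T=G$, then $\theta$ is $G$-invariant and, since $K$ is a $p'$-group and $p$ is odd, Isaacs' canonical extension / Glauberman correspondence lets me descend to $G/K$, where induction applies (with $xK$ still picky, and $\chi(x)$ determined by the character on $G/K$ via the extension).

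Next I would handle the case $\oh{p'}{G}=1$. By $p$-solvability, $\cent{G}{\oh{p}{G}}\le\oh{p}{G}$. Set $Q=\oh{p}{G}\le P$. If $Q=P$, then $P\nor G$, $\norm GP=G$, and the identity map works with $\epsilon_x=1$. Otherwise $Q<P$, and I would invoke the machinery tailored to $p$-solvable groups with $p$ odd: Isaacs' $B_{p'}$-correspondence and the Glauberman--Isaacs correspondence, applied to the action of $G/Q$ on $Q$ and on suitable sections, produce a canonical bijection of the relevant character sets that is well behaved on character values at $p$-elements. The odd-$p$ hypothesis is essential precisely here, since the Glauberman correspondence requires a solvable operator group of coprime order and the canonical extensions of odd-order characters are what make the sign $\epsilon_x$ well defined.

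The final step is to match $\chi(x)$ with $\chi^*(x)$ up to the sign $\epsilon_x$, using that $\cent{G}{x}\le\norm GP$: once the correspondence is built, the value $\chi(x)$ can be computed locally at $\norm GP$ through a Brauer-type formula, and comparison with $\chi^*(x)$ produces a sign which, by construction, factors through a canonical extension and so depends only on $x$, not on $\chi$. A parallel check handles $\chi(1)_p=\chi^*(1)_p$ using the standard degree-tracking through Clifford and the Isaacs correspondence.

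The main obstacle, in my view, is exactly this last point: classical McKay-type bijections preserve only the number of $p'$-degree characters (or degrees modulo $p$), whereas here an \emph{actual} equality $\chi(x)=\epsilon_x\chi^*(x)$ is required with a sign uniform in $\chi$. The delicate part is showing that the sign arising from the Glauberman--Isaacs construction at the base of the induction propagates unchanged through the Clifford reduction steps, so that no character-dependent sign discrepancies accumulate. I would expect this bookkeeping---rather than the existence of the bijection itself---to be where the argument requires the most care, and where the oddness of $p$ is used decisively.
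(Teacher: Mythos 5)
Your overall skeleton (induction on $|G|$, Clifford reduction over a normal $p'$-section, a Glauberman-type correspondence, oddness to pin down signs) is the right family of ideas, but there are concrete gaps at the two places where the theorem actually lives. First, in the reduction over $K=\oh{p'}{G}$ you claim that after a Frattini-type argument you may arrange $x\in P\le T=G_\theta$. This is false in general: for $\chi\in{\rm Irr}^x(G)$ one only gets (Lemma \ref{xinv}) that some $G$-conjugate of $\theta$ is $\langle x\rangle$-invariant, not $P$-invariant; characters with $\chi(1)$ divisible by $p$ can perfectly well have $\chi(x)\neq 0$ while no constituent of $\chi_K$ is $P$-invariant. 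This is precisely why the paper has to develop the \emph{relative} $\langle x\rangle$-Glauberman correspondence (Theorem \ref{rel}) and prove $H_\theta=H_\varphi$ (Lemma \ref{inv2}), using pickiness to force $\cent{K/L}{x}=\cent{K/L}{P}$, rather than quoting the classical $P$-Glauberman correspondence. Relatedly, the induced-character computation does not ``collapse to essentially a single term up to a sign and $[G:T]_{p'}$'': in the paper the sum $\psi^G(x)$ has many terms, one for each relevant coset (cf.\ Lemma \ref{lem:inductiondiamant} and the coset analysis inside Theorem \ref{anticfully}), and the whole point is that each term carries the \emph{same} sign; no index factor can survive if the values are to agree exactly.

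Second, your reduced case $\oh{p'}{G}=1$ is not a base case where the statement becomes accessible: such groups (e.g.\ $3^2\rtimes{\rm SL}(2,3)$ for $p=3$) still have $P$ non-normal, the proposed Glauberman--Isaacs correspondence ``applied to the action of $G/Q$ on $Q$'' is not a coprime action so that correspondence does not apply, and the $B_{p'}$-machinery controls $p'$-degree characters and degrees modulo $p$, not the set ${\rm Irr}^x(G)$ nor actual values at $x$. The paper never passes to $\oh{p'}{G}=1$; it peels off a $p'$-chief factor $K/L$ at the top of $\Oh{p'p}{G}$ and replaces $G$ by the proper subgroup $L\norm GP$, iterating by induction. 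The value-matching with a uniform sign --- which you correctly identify as the main obstacle but leave unresolved --- is then carried by Theorems \ref{fullycentralcase}--\ref{anticfully}: Turull's character triple isomorphism above the Glauberman correspondence matching canonical extensions, the evaluation $\tau(x)=\varepsilon\varphi_{p'}(1)$ from Theorem 13.6 of \cite{Is76}, and the identification of $\varepsilon$ as the unique sign with $[\theta_C,\varphi]\equiv\varepsilon \bmod p$ (this is where $p>2$ is indispensable; the paper's final section shows the uniform sign genuinely fails for $p=2$). Without some substitute for this mechanism the proposal does not yield the theorem.
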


In order to prove this theorem we need to use  deep results from the character theory of finite groups, like the Dade--Turull character correspondence (see Section 8.3 of \cite{N18}, for instance).   
Unfortunately, our methods are not sufficient to handle the case $p=2$. As we briefly explain in the final section of the paper, this limitation is related, among other things,  to the behavior of the Weil character associated with symplectic groups when evaluated on $2$-elements of order at least $4$.

\medskip
We refer the reader to \cite{M, Ma} for progress on the conjecture for group of Lie type and symmetric groups, respectively.

\section{Preliminaries}

We begin by recalling the following well-known result.

\begin{lem}\label{tom}
Suppose that $A$ acts as automorphism on a finite group $G$.
Let $N\nor G$ be $A$-invariant, with $(|G:N|, |A|)=1$. Let $C/N=\cent{G/N}A$.
\begin{enumerate}[(a)]
\item
If $\chi \in \irr G$ is $A$-invariant, then $\chi_N$ has an $A$-invariant irreducible
constituent $\theta$ and the set of the $A$-invariant irreducible constituents 
of $\chi_N$ is $\{\theta^c \, |\, c \in C\}$.  Hence,  $\theta$ is unique if $C=N$.
\item
If $\theta \in \irr N$ is $A$-invariant, then $\theta^G$ has an $A$-invariant irreducible
constituent $\chi$. Furthermore, $\chi$ is unique if $C=N$.
\item
Let $\chi \in \irr G$ and let $\theta \in \irr N$ under $\chi$.
If $C=G$, then $\chi$ is $A$-invariant if and only if $\theta$ is $A$-invariant.
\end{enumerate}
\end{lem}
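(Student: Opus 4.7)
The plan is to apply Glauberman's lemma once for each part, with an $A$-set chosen appropriately to the situation. Solvability is not an issue here: since $|G/N|$ and $|A|$ are coprime, one of the two has odd order, and Feit--Thompson supplies the required solvability. I would record at the outset that restriction and induction of characters commute with the $A$-action on $G$; in particular, if $\chi$ is $A$-invariant then so is $\chi_N$, and if $\theta$ is $A$-invariant then so is $\theta^G$.

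For (a), take $\Omega$ to be the set of irreducible constituents of $\chi_N$, on which $G$ acts transitively by Clifford's theorem and on which $A$ acts because $\chi^a=\chi$. The kernel of the $G$-action contains $N$, so really $G/N$ and $A$ act jointly on $\Omega$ in a coprime fashion. Glauberman's lemma then produces an $A$-fixed $\theta\in\Omega$ and identifies $\Omega^A$ with a single orbit of $C_{G/N}(A)=C/N$; unwinding, $\Omega^A=\{\theta^c:c\in C\}$, and uniqueness when $C=N$ is immediate.

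For (b), first observe that $\theta^G$ is $A$-invariant, so $A$ permutes $\irr{G\mid\theta}$. Passing to the inertia group $T:=G_\theta$, which is $A$-invariant because $\theta$ is, and using the $A$-equivariant Clifford correspondence $\irr{T\mid\theta}\leftrightarrow\irr{G\mid\theta}$, I reduce to the case $T=G$. Then the irreducible characters above the $G$-invariant $\theta$ correspond to irreducibles of a twisted group algebra of $G/N$, on which $A$ acts coprimely; Glauberman's lemma gives an $A$-fixed character $\chi$. For uniqueness when $C=N$, the orbit-counting half of the lemma combined with Brauer's permutation lemma forces the $A$-invariant character above $\theta$ to be unique.

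Part (c) combines (a) and (b). If $\chi$ is $A$-invariant, then by (a) the $A$-invariant constituents of $\chi_N$ form the orbit $\{\theta_0^c:c\in C\}$; when $C=G$ this is the full $G$-orbit of $\theta_0$, so every constituent of $\chi_N$ is $A$-invariant, $\theta$ in particular. Conversely, if $\theta$ is $A$-invariant and $C=G$, every $G$-conjugate of $\theta$ is $A$-invariant (since $g^a\in gN$ acts on $\irr{N}$ as $g$), giving $(\chi^a)_N=\chi_N$ for all $a\in A$; the $A$-equivariance of the Clifford/projective parametrization then forces $\chi^a=\chi$. The most delicate points are the uniqueness in (b) and the converse direction of (c), where one must verify that the coprimality $(|G/N|,|A|)=1$ makes the relevant cocycle $A$-invariant up to coboundary, so that the character-triple bookkeeping goes through cleanly.
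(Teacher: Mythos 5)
The paper disposes of this lemma by citation (Isaacs, Theorems 13.27 and 13.31 with Corollary 13.9 and Problem 13.10, and Wolf's Corollary 1.4), whereas you attempt to reprove everything from Glauberman's lemma. Your part (a) is correct and is exactly the argument behind Isaacs' Theorem 13.27: $G/N$ acts transitively on the constituents of $\chi_N$, the $A$-action is compatible and coprime, and Feit--Thompson supplies solvability. The problems are in (b) and (c). For the existence statement in (b), after reducing to $\theta$ being $G$-invariant you say that the irreducible characters of the twisted group algebra of $G/N$ are permuted by $A$ ``coprimely'' and that ``Glauberman's lemma gives an $A$-fixed character.'' Glauberman's lemma requires a \emph{transitive action of a group of order coprime to $|A|$, compatible with the $A$-action}, and the set ${\rm Irr}(G|\theta)$ carries no such transitive action in general. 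This is precisely why the existence result is a theorem (Isaacs 13.31) and not a one-line corollary: when $G/N$ is solvable one runs an induction along a chief series $N\le M\le\cdots\le G$, using that for an abelian factor $M/N$ the dual group ${\rm Irr}(M/N)$ \emph{does} act transitively on ${\rm Irr}(M|\theta)$, so Glauberman applies at each step; when instead $A$ is the solvable one, a separate counting argument (Brauer's permutation lemma for cyclic $A$, then induction on a chief series of $A$) is needed. As written, your existence step does not go through.

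In (c), the forward direction via (a) is fine, but the converse has a real gap. From the $A$-invariance of every $G$-conjugate of $\theta$ you deduce $(\chi^a)_N=\chi_N$, and then assert that ``the $A$-equivariance of the Clifford/projective parametrization forces $\chi^a=\chi$.'' Equality of restrictions to $N$ does not force equality of characters: by Gallagher there are typically many members of ${\rm Irr}(G|\theta)$ with the same restriction to $N$, and $A$ could a priori permute them. The actual content of Wolf's Corollary 1.4 is the following coprimality argument, which you gesture at but do not carry out: reduce via an $A$-compatible character triple isomorphism to the case where $N$ is central and $\theta$ is linear and faithful; then $\chi^a(g)=\chi(g)\theta([g,a^{-1}])=\lambda_a(g)\chi(g)$ where $\lambda_a\in{\rm Irr}(G/N)$ is linear (using $[G,A]\le N\le \zent G$), the map $a\mapsto\lambda_a$ is a homomorphism from $A$ into ${\rm Irr}(G/N)$ because $A$ acts trivially on $G/N$, and its image has order dividing both $|A|$ and $|G/N|$, hence is trivial. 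You would need to supply this (or simply cite Wolf, as the paper does) for (c), and the chief-series or counting argument for (b), before the proof is complete.
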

\begin{proof}
For (a), use Theorem 13.27 of \cite{Is76}, using Corollary 13.9 of \cite{Is76}.
Notice that if $\theta$ is $A$-invariant and $c \in C$, then $\theta^c$ is also $A$-invariant.
For (b), see Theorem 13.31 and Problem 13.10 of \cite{Is76}.
For (c), see Corollary 1.4 of \cite{W79}.
\end{proof}

An important tool in our proofs is the relative Glauberman correspondence.
If $A$ acts by automorphisms on a finite group $G$, let ${\rm Irr}_A(G)$ be
the set of $A$-invariant irreducible characters of $G$.

\begin{thm}\label{rel}
Suppose that $P$ is a $p$-group acting on a finite group $G$ by automorphisms.
Suppose that $N \nor G$ is $P$-invariant and $G/N$ has order not divisible by $p$.
Let $C/N=\cent{G/N}P$.
Then there is a natural bijection $^*:{\rm Irr}_P(G) \rightarrow {\rm Irr}_P(C)$.
In fact, if $\chi \in {\rm Irr}_P(G)$, we have that 
$$\chi_C=e\chi^* + p \Delta + \Xi \, ,$$
where $e \equiv \pm 1$ mod $p$,  $\Delta$ and $\Xi$ are characters of $C$ (or zero),
and no irreducible constituent of $\Xi$ lies over some $P$-invariant character of $N$.
Furthermore, if $\tau \in {\rm Irr}_P(C)$, then we can write
$$\tau^G=d\mu + p \Psi + \rho \, ,$$
where $\mu \in {\rm Irr}_P(G)$  is such that $\mu^*=\tau$, $d \equiv \pm 1$ mod $p$, 
$\Psi$ and $\rho$ are characters of $G$ (or zero), and no irreducible constituent of
$\rho$ is $P$-invariant.
\end{thm}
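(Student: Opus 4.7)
The plan is to prove this by induction on $|G:N|$, using Clifford theory to reduce to the ordinary Glauberman correspondence (the case $N=1$). I would start by using Lemma~\ref{tom}(a) to choose, for each $P$-invariant $\chi\in\irr G$, a $P$-invariant irreducible constituent $\theta$ of $\chi_N$. Setting $I=I_G(\theta)$, which is automatically $P$-invariant, one checks directly that $(I\cap C)/N=\cent{I/N}P$, so the hypotheses of the theorem apply to the triple $(I,N,P)$. When $I<G$, I would combine the usual Clifford correspondences ${\rm Irr}_P(G\mid\theta)\leftrightarrow{\rm Irr}_P(I\mid\theta)$ and ${\rm Irr}_P(C\mid\theta)\leftrightarrow{\rm Irr}_P(I\cap C\mid\theta)$ with the inductive bijection to produce $\chi^*$, and verify independence from the choice of $\theta$ using the $C$-orbit statement in Lemma~\ref{tom}(a).

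The substantial case is $I=G$, i.e.\ $\theta$ is $G$-invariant. Then $(G,N,\theta)$ is a character triple carrying a compatible $P$-action, and my plan would be to invoke a $P$-equivariant character triple isomorphism (available because $(|G/N|,|P|)=1$) to replace it by $(G_1,N_1,\theta_1)$ with $N_1\le\zent{G_1}$, $\theta_1$ linear and faithful on $N_1$, and $G_1/N_1\cong G/N$ with the $P$-action transported across. The characters of $G_1$ lying over $\theta_1$ then correspond bijectively to $\irr{G_1/N_1}$ (by Gallagher), and $P$ acts coprimely on $G_1/N_1$, so the ordinary Glauberman correspondence delivers the bijection onto ${\rm Irr}_P(\cent{G_1/N_1}P)$; pulling back gives the map onto ${\rm Irr}_P((I\cap C)/N)$.

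The decomposition formula $\chi_C=e\chi^*+p\Delta+\Xi$ with $e\equiv\pm 1\pmod{p}$ should be tracked through each reduction. In the base case it comes from the standard restriction formula of the ordinary Glauberman correspondence, and the error term $\Xi$ records precisely those constituents of $\chi_C$ whose irreducible constituents on $N$ are not $P$-invariant, which are invisible to the $^*$-map. The dual formula for induction $\tau^G$ follows by Frobenius reciprocity once the restriction formula is established; one reads off $\mu$ as the unique $P$-invariant constituent of $\tau^G$ whose $^*$-image is $\tau$, with the non-$P$-invariant constituents absorbed into $\rho$.

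The hardest step, in my estimation, is the construction of the $P$-equivariant character triple isomorphism above: one must lift the $P$-action from $G/N$ to a central extension realizing the cohomology class of $\theta$, and this genuinely relies on $(|G/N|,|P|)=1$ via a Schur--Zassenhaus-type splitting. Verifying that the constants $e$ (and $d$) retain the stated $\pm 1 \pmod{p}$ behavior through the reductions is also a delicate bookkeeping task, since the Clifford correspondence alters character degrees and one must track their $p$-parts carefully at each step.
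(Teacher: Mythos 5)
The first thing to note is that the paper does not actually prove the first part of this statement: the bijection $^*$ and the restriction formula $\chi_C=e\chi^*+p\Delta+\Xi$ are quoted directly from Theorem E of \cite{NTV}, and only the ``furthermore'' clause about $\tau^G$ is proved. Your proposal instead sketches a full re-proof of that cited result by induction on $|G:N|$. As an outline it is of the right general shape (Clifford reduction to the stabilizer, then a $P$-equivariant character triple isomorphism in the fully invariant case, obtained e.g.\ by working in $G\rtimes P$), but it contains a misstep: after replacing $(G,N,\theta)$ by $(G_1,N_1,\theta_1)$ with $N_1$ central and $\theta_1$ faithful linear, Gallagher gives a bijection $\irr{G_1|\theta_1}\leftrightarrow\irr{G_1/N_1}$ only if $\theta_1$ extends to $G_1$, which is exactly what cannot be assumed (a central cyclic $N_1$ with faithful $\theta_1$ is the standard way of \emph{retaining} the extension obstruction). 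The verification that $e\equiv\pm1\bmod p$ survives the Clifford and triple-isomorphism reductions is also asserted rather than carried out. Since the paper treats this half as a citation, none of this is required, but as written it does not constitute a proof.

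The genuine gap is in the one part the paper does prove. You claim the induction formula ``follows by Frobenius reciprocity,'' with $\mu$ read off as the constituent mapping to $\tau$ and ``the non-$P$-invariant constituents absorbed into $\rho$.'' The problematic case is a $P$-invariant constituent $\xi$ of $\tau^G$ with $\xi\neq\mu$: by definition it cannot be placed in $\rho$, so one must show $[\tau^G,\xi]\equiv 0\bmod p$. Frobenius reciprocity only gives $[\tau^G,\xi]=[\xi_C,\tau]$; writing $\xi_C=q\xi^*+pT+S$ and using $\xi^*\neq\tau$, this equals $p[T,\tau]+[S,\tau]$, so one still has to rule out $p\nmid[S,\tau]$. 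The paper's argument is that $\tau$, being $P$-invariant with $|C/N|$ prime to $p$, lies over a $P$-invariant character of $N$ (Lemma \ref{tom}), whereas no constituent of $S$ does; hence $[S,\tau]=0$ and $[\tau^G,\xi]=p[T,\tau]$. One also identifies $d=[\tau^G,\mu]=[\mu_C,\tau]=e\equiv\pm1\bmod p$. This argument is entirely absent from your proposal, and without it the decomposition $\tau^G=d\mu+p\Psi+\rho$ with $\rho$ free of $P$-invariant constituents is not established.
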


\begin{proof}
This is Theorem E of \cite{NTV}, where we have added a second part which we prove now.
Notice that $[\chi_C, \chi^*]$ is not divisible by $p$, since $[\chi^*, \Xi]=0$ (using Lemma \ref{tom}(c)).
Let $\tau \in {\rm Irr}_P(C)$ and let $\mu \in {\rm Irr}_P(G)$ such that $\mu^*=\tau$.
Hence we can write $$\tau^G=f\mu + p \Psi + \rho \, ,$$
where $\Psi$ and $\rho$ are characters of $G$ (or zero),   
$f=[\tau^G, \mu]$ is not divisible by $p$,  $[\rho, \mu]=0$, and
all irreducible constituents of $\rho$ occur with multiplicity not divisible by $p$.
Suppose that $\xi \in \irr G$ is $P$-invariant and is a constituent of $\rho$.
Then $[\tau^G, \xi]=[\xi_C, \tau]$ is not divisible by $p$.
Write $\xi_C=q\xi^* + pT + S$, where no irreducible constituent of $S$ lies over some irreducible
$P$-invariant of $N$. Since $\xi \ne \mu$, then $\xi^* \ne \mu^*=\tau$. Since $\tau$ occurs with multiplicity not divisible
by $p$, necessarily, we have that $[S, \tau]\ne 0$. But, since $\tau$ is $P$-invariant and $|C/N|$ is coprime to $p$, we have that $\tau_N$ has $P$-invariant irreducible constituents, and this is
a contradiction.
\end{proof}

We will call the character $\varphi$ above the {\it relative $A$-Glauberman} correspondent of $\theta$. 

\begin{lem}\label{inv2}
Suppose that $G$ is a finite group and that $L\leq K\leq G$ are normal subgroups of $G$ such that
$K/L$ is a $p'$-group. Suppose that $PK$ is normal in $G$, where $P \in \syl pG$.
Let $H=L\norm GP$ and write $K\cap H=C$.  Let $xL \in H/L$ be a $p$-element such that $\cent{K/L}{x}=C/L$.
Let $\theta \in \irr K$ be $x$-invariant, and let $\varphi \in \irr C$ be the relative $\langle x\rangle$-Glauberman correspondent
of $\theta$.   Then $H_\theta=H_\varphi$. 
\end{lem}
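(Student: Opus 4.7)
The plan is to exploit the fact that $\bar{P}:=PL/L$ is a \emph{normal} Sylow $p$-subgroup of $\bar{H}:=H/L$, which follows from the hypothesis $H=L\,\norm{G}{P}$ (both $\norm{G}{P}$ and $L$ normalize $PL$; for $l\in L$ one uses $lPl^{-1}\subseteq PL$ and an order count to get $lPLl^{-1}=PL$). Once this is in hand, every $p$-element of $\bar{H}$ lies in $\bar{P}$, so for each $\bar{h}\in\bar{H}$ the subgroup $Q:=\langle\bar{x},\bar{h}\bar{x}\bar{h}^{-1}\rangle\le\bar{P}$ is a $p$-group, satisfies $\cent{K/L}{Q}=\bar{C}$ (since $\bar{h}$ normalizes $\bar{C}$), and can be used to invoke Theorem~\ref{rel} with $Q$ in place of $P$. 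Since $L\subseteq K$ and $L\subseteq C$, conjugation by elements of $L$ fixes every character in $\irr{K}$ and $\irr{C}$, so $L\le H_\theta\cap H_\varphi$ and the claim reduces to proving $\bar{H}_\theta=\bar{H}_\varphi$ in $\bar{H}$.

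For $\bar{H}_\theta\subseteq\bar{H}_\varphi$, fix $\bar{h}\in\bar{H}_\theta$ and set $\bar{y}=\bar{h}\bar{x}\bar{h}^{-1}$, $Q=\langle\bar{x},\bar{y}\rangle$. Then $\theta$ is $Q$-invariant because it is $\bar{x}$- and $\bar{h}$-invariant, hence $\bar{y}$-invariant. Theorem~\ref{rel} applied to $Q$ yields a $Q$-Glauberman correspondent $\varphi_Q\in\irr{C}$ of $\theta$, characterized as the unique $Q$-invariant irreducible constituent of $\theta_C$ with multiplicity not divisible by $p$. Since $\langle\bar{x}\rangle\le Q$, this $\varphi_Q$ is also $\bar{x}$-invariant, so the $\langle\bar{x}\rangle$-version of the same uniqueness forces $\varphi_Q=\varphi$. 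Thus $\varphi$ is $Q$-invariant, in particular $\varphi^{\bar{y}}=\varphi$, and the relation $\bar{h}\bar{x}=\bar{y}\bar{h}$ gives $(\varphi^{\bar{h}})^{\bar{x}}=\varphi^{\bar{h}}$. As $[\theta_C,\varphi^{\bar{h}}]=[\theta_C,\varphi]$ is prime to $p$, the $\langle\bar{x}\rangle$-uniqueness yields $\varphi^{\bar{h}}=\varphi$.

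The reverse inclusion $\bar{H}_\varphi\subseteq\bar{H}_\theta$ is symmetric, using the ``furthermore'' part of Theorem~\ref{rel}: for $\bar{h}\in\bar{H}_\varphi$ the same $Q$ is a $p$-group, $\varphi$ is $Q$-invariant, and the unique $Q$-invariant irreducible constituent of $\varphi^K$ with multiplicity prime to $p$ must equal $\theta$ by $\langle\bar{x}\rangle$-uniqueness. Thus $\theta$ is $Q$-invariant, in particular $\bar{y}$-invariant, and the analogous calculation forces $\theta^{\bar{h}}=\theta$. The main obstacle is the passage from $\langle\bar{x}\rangle$ to $\langle\bar{x},\bar{y}\rangle$: the cyclic subgroup $\langle\bar{x}\rangle$ gives no direct control over $\bar{y}$-invariance, and the fact that the enlarged $\langle\bar{x},\bar{y}\rangle$ is still a $p$-group rests squarely on $\bar{P}$ being normal in $\bar{H}$, which is exactly what the hypothesis $H=L\,\norm{G}{P}$ provides.
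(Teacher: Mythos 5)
Your proof is correct, and it rests on the same two pillars as the paper's: the relative Glauberman correspondence (Theorem~\ref{rel}) applied to a $p$-group strictly larger than $\langle x\rangle$, and the characterization of the correspondent as the unique invariant irreducible constituent of $\theta_C$ (resp.\ of $\varphi^K$) whose multiplicity is prime to $p$. The organization, though, is genuinely different and arguably cleaner. The paper argues in two stages: for $h\in P$ it forms $R=\langle h,x\rangle$ and proves $P_\theta=P_\varphi$ (its forward inclusion there is obtained by tracking invariant characters of $L$ via Lemma~\ref{tom}(a),(c), not by your uniqueness argument); it then sets $U=PL\nor H$, deduces $U_\theta=U_\varphi$, and for $y\in H_\theta$ uses $x^{y^{-1}}\in U_\varphi$ to see that $\varphi^y$ is $x$-invariant before finishing by uniqueness. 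You instead observe that $PL/L$ is a normal Sylow $p$-subgroup of $H/L$, so that $Q=\langle \bar x,\bar h\bar x\bar h^{-1}\rangle$ is a $p$-group for \emph{every} $\bar h\in \bar H$, and you run the uniqueness argument once, uniformly, for both inclusions; this absorbs the paper's intermediate step $P_\theta=P_\varphi$ and both arguments ultimately exploit the same structural fact $PL\nor H$. The one point you should make explicit is that $Q$ lives in $H/L$, so before invoking Theorem~\ref{rel} on $K$ you must replace it by a genuine $p$-subgroup $R\le PL$ with $RL/L=Q$; since $L\le K\cap C$ acts on ${\rm Irr}(K)$, on ${\rm Irr}(C)$ and on $K/L$ by inner automorphisms, one has ${\rm Irr}_R(K)={\rm Irr}_Q(K)$ and $\cent{K/L}{R}=\cent{K/L}{Q}=C/L$, so nothing is lost (the paper's $R=\langle h,x\rangle$ glosses over the same normalization of $x$ modulo $L$).
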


\begin{proof}
Let $U=PL$. By Theorem \ref{rel} we can write $$\theta_C=e\varphi + p \Delta + \Xi, $$
where $e \equiv \pm 1$ mod $p$ and no irreducible constituent of $\Xi$ lies over an $x$-invariant irreducible
character of $L$.  By Lemma \ref{tom}, let $\xi \in \irr L$ be $x$-invariant under $\varphi$.
Now let $h \in P$ and let $R=\langle h, x\rangle$. 
Suppose that $\theta$ is $h$-invariant. Since $\cent{K/L}P=\cent{K/L} x=C/L$,
we have that $C/L=\cent{K/L} R$. By Lemma \ref{tom}(a), let $\xi_1 \in \irr L$ be $R$-invariant under $\theta$.
By Lemma \ref{tom}(a) applied to the $x$-action, we conclude that $\xi_1=\xi^c$ and $\xi$ is also $R$-invariant.
Now, $\varphi^h$ lies over $\xi^h=\xi$ and under $\theta^h=\theta$. Thus $\varphi^h$ is $x$-invariant by
Lemma \ref{tom}(c). Necessarily, we have that $\varphi^h=\varphi$.

Conversely, suppose that $\varphi^h=\varphi$. Thus $\varphi$ is $R$-invariant.
Write $\varphi^K=e\theta + p\Psi + \rho$,
where no irreducible constituent of $\rho$ is $x$-invariant. 
Now, if $\tilde\theta \in {\rm Irr}_R(K)$ is the $R$-relative Glauberman correspondent of $\varphi$,
we have that $[\varphi^K, \tilde\theta]$ is not divisible by $p$. We conclude that $\tilde\theta=\theta$
is $R$-invariant. Thus we have proved that $P_\theta=P_\varphi$.
Hence, $U_\theta=U_\varphi$.  Notice that $U_\theta \nor H_\theta$ since $U\nor H$.

Let $y \in H_\theta$. Then $\theta^y=\theta$.  Hence $x^{y^{-1}} \in U_\theta=U_\varphi$.  Now, $\varphi^y$ is an irreducible
constituent of $\theta_C$, with multiplicity not divisible by $p$. But also $\varphi^{yx}=\varphi^{y}$ because $yxy^{-1}$ fixes $\varphi$.
By uniqueness, $\varphi^{y}=\varphi$, since $\varphi^y$ lies over some $x$-invariant character of $L$. Thus $H_\theta \sbs H_\varphi$. The same argument going up
shows the equality.
\end{proof}

\begin{lem}\label{elem2}
Suppose that $G$ is a finite group and that $L\leq K\leq G$ are normal subgroups of $G$ such that
$K/L$ is a $p'$-group. Suppose that $H \le G$ is such that $G=KH$ and write $K\cap H=C$.
Let $x\in H$ such that $xL \in H/L$ is a $p$-element and suppose that $H/L$ has a normal Sylow $p$-subgroup. Then $x^g \in H$ implies $g\in H$ for any $g\in G$ if and only if $\cent{K/L}x\subseteq C/L$. Moreover, if this happens then $\mu^G(x)=\mu(x)$ for all characters $\mu$ of $H$.
\end{lem}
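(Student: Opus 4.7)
The plan is to quotient out $L$ and work in $\bar G = G/L$, with $\bar K$, $\bar H$, $\bar C$ defined analogously and $\bar P$ the normal Sylow $p$-subgroup of $\bar H$ furnished by hypothesis. As a setup, I will record two structural facts: (i) $\bar P \cap \bar K = 1$, because $\bar P$ is a $p$-group and $\bar K$ a $p'$-group; and (ii) $\bar P$ is in fact a Sylow $p$-subgroup of the whole of $\bar G$, by the product formula $|\bar G| = |\bar K|\,|\bar H|/|\bar C|$ (which makes $|\bar G|_p = |\bar H|_p = |\bar P|$). I will also note $\bar C = \bar K \cap \bar H \nor \bar H$.

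The forward implication is a one-liner. If $kL \in \cent{\bar K}{\bar x}$, then $x^k \in xL \subseteq H$ (using $L \le H$, which is forced by $xL \in H/L$), so by hypothesis $k \in H$ and hence $k \in K \cap H = C$.

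For the reverse implication, I will first reduce to the case $g \in K$ via the decomposition $g = kh$ with $k \in K$, $h \in H$: since $H$ is a group, $x^g \in H \iff x^k \in H$, and $g \in H \iff k \in C$. Assume then that $k \in K$ satisfies $x^k \in H$. Here is the heart of the argument: both $\bar x$ and $\bar x^{\bar k}$ are $p$-elements of $\bar H$, so both lie in $\bar P$; on the other hand the product $\bar x^{\bar k}\bar x^{-1} = \bar k^{-1}(\bar x\bar k\bar x^{-1})$ lies in $\bar K$ because $\bar K \nor \bar G$. Thus $\bar x^{\bar k}\bar x^{-1} \in \bar P \cap \bar K = 1$, so $\bar k \in \cent{\bar K}{\bar x} \subseteq \bar C$, i.e., $k \in C \subseteq H$, as needed.

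For the ``moreover'' clause, I will invoke the induced-character formula
\[
\mu^G(x) \;=\; \frac{1}{|H|}\sum_{\substack{g \in G\\ x^g \in H}} \mu(x^g).
\]
The equivalence just established shows that the indexing set is exactly $H$, and on $H$ one has $\mu(x^g) = \mu(x)$ because $\mu$ is a class function of $H$ and $x^g \in H$. Thus the sum collapses to $|H|\mu(x)/|H| = \mu(x)$.

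The only substantive step is the small fusion computation in the reverse direction; it combines the normality of $\bar P$ in $\bar H$ with the $p'$-ness of $\bar K$ in an essential way. Everything else is routine bookkeeping about quotients and the product decomposition $G = KH$.
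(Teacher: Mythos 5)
Your proof is correct and follows essentially the same route as the paper's: in both, the decisive step is that the ``difference'' $x^kL\cdot(xL)^{-1}$ is simultaneously a $p$-element of $H/L$ (via the normal Sylow $p$-subgroup) and an element of the $p'$-group $K/L$, hence trivial. Your choice of decomposition $g=kh$ rather than the paper's $g=hk$ is a minor streamlining that lets you skip the observation that $\cent{K/L}{x^h}\subseteq C/L$ for $h\in H$, but the substance of the argument is identical.
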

\begin{proof}
Let $k \in K$. Notice that  $k^xL=kL$ if and only if $x^kL=xL$. 
Suppose that $x^g \in H$ implies $g\in H$ for any $g \in G$. Let $kL \in K/L$ centralized by $x$.
Then $x^k \in H$ and thus $k \in H \cap K=C$, by hypothesis.

Conversely, assume that $\cent{K/L} x\subseteq C/L$. Since $C$ is normal in $H$, then $\cent{K/L}{x^h}=\cent{K/L} x^h\subseteq C/L$ for all $h \in H$.
Now, suppose that $x^k \in H$ for some $k \in K$. Then $x^{-1} x^k \in H$. But $x^{-1}k^{-1} x k \in K$.
Thus $x^{-1} x^k \in C$. Therefore $x^{-1}x^kL\in C/L$ and it is a $p$-element (because $x^{-1}L,x^kL$ lie in the unique Sylow $p$-subgroup of $H/L$). Since $C/L$ is a $p'$-group, we conclude that $x^{-1}x^k\in L$, and thus $k^xL=kL$. Then $k \in \cent {K/L}x\subseteq C \sbs H$. 
Finally, if $x^{hk} \in H$ for some $h\in H$ and $k\in K$, then using that $\cent{K/L} {x^h}\subseteq C$, we have that $k \in H$ and we are done.
 
 For the last assertion, if $\mu$ is a character of $H$, then
$$\mu^G(x)={1 \over |H|} \sum_{g \in G \atop g^{-1}xg \in H} \mu(g^{-1}xg)=
{1 \over |H|} \sum_{g \in H} \mu(g^{-1}xg)=\mu(x) \, ,$$
as desired. 
\end{proof}

Although we shall not need it, the following elementary fact might perhaps be useful in the future.

\begin{pro}
Let $G$ be a finite group, let $H\leq G$ and let $x\in H$ be a $p$-element. Then $(1_H)^G(x)=1$ if and only if
$H$ contains all the $p$-Sylow normalizers that contain $x$.
\end{pro}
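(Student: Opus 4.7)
The plan is to prove both implications directly, using the $p$-group action of $\langle x\rangle$ on $G/H$ together with Sylow's theorem applied inside $H$. No fusion-theoretic input should be necessary.

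For the forward direction, assume $(1_H)^G(x)=1$, so $\langle x\rangle$ has exactly one fixed point on $G/H$, namely $H$ itself. Since $\langle x\rangle$ is a $p$-group, this gives $|G:H|\equiv 1\pmod p$. Now fix any $P\in\syl pG$ with $x\in P$. The $P$-fixed cosets on $G/H$ lie inside the $\langle x\rangle$-fixed cosets, so there is at most one; by the $p$-group orbit-counting lemma the number of $P$-fixed cosets is also $\equiv |G:H|\equiv 1\pmod p$, so there is exactly one, and it must be $H$. Hence $P\leq H$, and for any $n\in \norm GP$ one then has $n^{-1}xn\in P\leq H$, which by the hypothesis $(1_H)^G(x)=1$ forces $n\in H$. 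This proves $\norm GP\leq H$.

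For the converse, assume $\norm GP\leq H$ for every $P\in\syl pG$ with $x\in P$, and fix one such $P_0$. Since $P_0\leq H$, $P_0$ is a Sylow $p$-subgroup of $H$ and $|G:H|$ is coprime to $p$, so in fact every Sylow $p$-subgroup of $H$ is a Sylow $p$-subgroup of $G$. Take $g\in G$ with $y:=g^{-1}xg\in H$; the goal is $g\in H$. By Sylow's theorem applied inside $H$ to the $p$-element $y$, there is $h\in H$ with $hyh^{-1}\in P_0$; replacing $g$ by $gh^{-1}$, which does not affect whether $g\in H$, I may assume $y\in P_0$. Then $gP_0g^{-1}$ is a Sylow $p$-subgroup of $G$ containing $gyg^{-1}=x$, so by hypothesis $\norm G{gP_0g^{-1}}\leq H$; in particular $gP_0g^{-1}\leq H$, and it is another Sylow $p$-subgroup of $H$. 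Sylow's theorem in $H$ then yields $h'\in H$ with $h'P_0(h')^{-1}=gP_0g^{-1}$, and therefore $(h')^{-1}g\in \norm G{P_0}\leq H$, proving $g\in H$.

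The only delicate point is the reverse direction, where the hypothesis is invoked twice, for $P_0$ and for the conjugate $gP_0g^{-1}$. The key observation that makes this work is that $gP_0g^{-1}$ contains $x$ (not $y$), which is exactly what places it in the distinguished family of Sylow $p$-subgroups to which the hypothesis applies. Apart from this bookkeeping I do not foresee any real obstacle.
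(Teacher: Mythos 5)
Your proof is correct. The direction ``normalizers contained in $H$ implies $(1_H)^G(x)=1$'' is essentially the paper's argument: normalize the conjugate of $x$ into a fixed Sylow subgroup, observe that the resulting conjugate of that Sylow subgroup contains $x$ and hence lies in $H$, and finish with Sylow's theorem in $H$ plus the containment $\norm G{P_0}\leq H$; the bookkeeping differs only cosmetically from the paper's. The other direction is where you genuinely diverge. The paper takes $Q\in\syl pH$ with $x\in Q$, embeds it in $P\in\syl pG$ with $P\cap H=Q$, and rules out $Q<P$ by producing $y\in\norm PQ\setminus Q$ with $x^y\in Q\subseteq H$ but $y\notin H$ (the ``normalizers grow in $p$-groups'' argument); it then has to treat an arbitrary Sylow subgroup $R$ containing $x$ by conjugating $R$ to $Q$ and invoking the fixed-point condition twice more. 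You instead read $(1_H)^G(x)$ as the number of $\langle x\rangle$-fixed cosets on $G/H$, deduce $|G:H|\equiv 1 \pmod p$, and then apply the same fixed-point congruence to any $P\in\syl pG$ containing $x$ to get $P\leq H$ in one stroke; this handles all such $P$ simultaneously and makes the final step ($n^{-1}xn\in P\leq H$ forces $n\in H$) immediate. Both arguments are elementary; yours is a bit shorter and more uniform, while the paper's avoids the character-theoretic/fixed-point reformulation and works purely with Sylow theory. One point you silently use, which is harmless but worth noting: for a $p$-element $x$ and $P\in\syl pG$, having $x\in\norm GP$ is equivalent to $x\in P$ (since $\langle x\rangle P$ would otherwise be a $p$-group properly containing $P$), so interpreting ``Sylow normalizers containing $x$'' as ``normalizers of Sylow subgroups containing $x$'' loses nothing.
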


\begin{proof}
Suppose that $H$ contains all the $p$-Sylow normalizers that contain $x$.
In particular, $|G:H|$ is $p'$.  Suppose that $x \in P \in \syl pH$.
Let $g \in G$ be such that $x^g \in H$.  Then $x^g \in P^h$ for some $h \in H$.
Thus $x \in P^{hg^{-1}}$. Hence, $P^{hg^{-1}} \sbs H$, by hypothesis.
Therefore, $P^{hg^{-1}}=P^{h_1}$ for some $h_1 \in H$. We have that $hg^{-1}h_1^{-1}$ normalizes $P$,
and therefore is contained in $H$. We conclude that $g \in H$. Therefore 
$(1_H)^G(x)=1$, as wanted.

Conversely, suppose that $(1_H)^G(x)=1$. Then, if $x^g \in H$ for some $g \in G$ we have that $g\in H$.
Let $Q$ be a Sylow $p$-subgroup of $H$ containing $x$.
Let $P \in \syl pG$ such that $P\cap H=Q$.  If $Q<P$, then $Q< \norm PQ$. Let $y \in \norm PQ-Q$.
Then $x^y \in Q$ but $y \not\in H$, a contradiction. So we may assume that $Q\in \syl pG$.
Suppose that $x \in R \in \syl pG$. Let $y \in \norm GR$.
Then $R^g=Q$ for some $g \in G$. Then $x^g \in H$ and therefore
$g \in H$.  Also, $x^{yg} \in H$, and thus $yg \in H$. Hence $y \in H$, as wanted.
\end{proof}

Note that by Proposition 2.6 of \cite{M} this means that $(1_H)^G(x)=1$ if and only if $H$ contains the subnormalizer of $x$.

\smallskip

\section{The Key result}

We start by proving the following:

\begin{thm}\label{fullycentralcase}
Suppose that $G$ is a finite group, $p$ is an odd prime, $L\leq K\leq G$ are normal subgroups of $G$ such that
$K/L$ is a $p'$-group and $L\subseteq{\rm \textbf{Z}}(G)$. Suppose that $V=PK$ is normal in $G$, where $P \in \syl pG$.
Let $H=\norm GP$. Note that $G=KH$ and write $K\cap H=C$. 
Let $\theta \in \irr K$ be $G$-invariant, and let $\varphi \in \irr C$ be the relative $P$-Glauberman correspondent
of $\theta$.  Then there exists a bijection
$$f: {\rm Irr}(G|\theta) \rightarrow {\rm Irr}(H|\varphi)$$ such that
$\chi(1)_p=f(\chi)(1)_p$ for every $\chi \in \irr{G|\theta}$. Furthermore,   if $x \in P$ is such that $xL$ is picky in $G/L$, then  
$\chi(x)=\varepsilon f(\chi)(x)$, where $\varepsilon\in\{1,-1\}$ is the unique sign such that
 $[\theta_C,\varphi]\equiv \varepsilon$ mod $p$.
 \end{thm}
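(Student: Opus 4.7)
The strategy is to upgrade the relative Glauberman correspondence of Theorem~\ref{rel} to a character triple isomorphism $(G,K,\theta)\sim(H,C,\varphi)$, and read off the three conclusions from it, with the picky hypothesis entering only in the last step.

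First I would verify that $\varphi$ is $H$-invariant: $H=\norm GP$ normalizes each of $K$, $L$, $P$, and $C$, and the naturality of the relative Glauberman correspondence carries $H$-invariance from $\theta$ to $\varphi$. The Frattini argument gives $G=KH$, so $hC\mapsto hK$ is a canonical isomorphism $H/C\to G/K$. The formula $\theta_C=e\varphi+p\Delta+\Xi$ of Theorem~\ref{rel} combined with the Dade--Isaacs style Clifford theory recalled in Section~8.3 of \cite{N18} then supplies a character triple isomorphism $(G,K,\theta)\sim(H,C,\varphi)$ along this identification, realized via compatible projective extensions $\hat\theta$ of $\theta$ to $G$ and $\hat\varphi$ of $\varphi$ to $H$ with a common cocycle $\alpha$ on $G/K\cong H/C$. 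This yields the bijection $f\colon\irr{G|\theta}\to\irr{H|\varphi}$ together with the degree ratio $\chi(1)/\theta(1)=f(\chi)(1)/\varphi(1)$. Since $L\sbs\zent G$ and $K/L$, $C/L$ are $p'$-groups, Schur's bound $\theta(1)\mid [K:\zent K]$ forces both $\theta(1)$ and $\varphi(1)$ to be coprime to $p$, giving $\chi(1)_p=f(\chi)(1)_p$.

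For the character value statement, let $x\in P$ with $xL$ picky in $G/L$. The unique Sylow $p$-subgroup of $G/L$ containing $xL$ is $PL/L$, and since $L\sbs\zent G$ one checks $\norm G{PL}=H$; hence picky translates to $x^g\in PL\Rightarrow g\in H$. As $x^g$ is a $p$-element and $P$ is the unique Sylow $p$-subgroup of both $PL$ and $H$, this is equivalent to $x^g\in H\Rightarrow g\in H$, and Lemma~\ref{inv2} then gives $\cent{K/L}x\sbs C/L$ together with $\mu^G(x)=\mu(x)$ for every character $\mu$ of $H$. Writing $\chi\in\irr{G|\theta}$ as $\chi=\beta\cdot\hat\theta$ and $f(\chi)=\beta\cdot\hat\varphi$ in Gallagher form for a common projective character $\beta$ of $G/K\cong H/C$ with dual cocycle $\alpha^{-1}$, the identification $xK\mapsto xC$ makes the $\beta$-factor cancel, so
$$\frac{\chi(x)}{f(\chi)(x)}=\frac{\hat\theta(x)}{\hat\varphi(x)}$$
is a constant depending only on $\theta,\varphi,x$ and not on the choice of $\chi$.

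The hard part, and the main obstacle, is to identify this constant with $\varepsilon$. One chooses the extensions $\hat\theta,\hat\varphi$ canonically, using the odd-$p$ hypothesis to remove a determinant/Schur sign ambiguity, and then proves the identity $\hat\theta(x)=\varepsilon\,\hat\varphi(x)$ in the relative Glauberman setup. The congruence $\hat\theta(x)\equiv\varepsilon\,\hat\varphi(x)\pmod p$ is essentially built into Theorem~\ref{rel} via $[\theta_C,\varphi]\equiv\varepsilon\pmod p$; upgrading this to an equality is where the picky hypothesis does genuine work, forcing the ``error terms'' $p\Delta+\Xi$ to drop out of the relevant $H$-induced computation through Lemma~\ref{inv2}. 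The odd-$p$ restriction enters decisively at precisely this step, paralleling the $2$-element obstructions alluded to in the introduction.
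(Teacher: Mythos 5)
Your overall architecture agrees with the paper's: build a character triple isomorphism above the relative Glauberman correspondence, get the bijection and the degree statement from it, and reduce the value statement to comparing two compatible extensions of $\theta$ and $\varphi$ at $x$. (The paper does this slightly differently: since $L\sbs\zent G$, it writes $K=X\times L_p$ with $X\nor G$ a $p'$-group, splits $\theta=\theta_{p'}\times\theta_p$, and uses Turull's result that there is a character triple isomorphism $(G,X,\theta_{p'})\to(H,C\cap X,\varphi_{p'})$ carrying the \emph{canonical} extension of $\theta_{p'}$ to $V=PK$ to the canonical extension of $\varphi_{p'}$; this avoids projective representations entirely because $V/X$ is a $p$-group, and it is also how the $p'$-degree observation you make is packaged.)

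The genuine gap is the step you yourself flag as ``the hard part'': proving $\hat\theta(x)=\varepsilon\,\hat\varphi(x)$. You assert that the congruence mod $p$ from Theorem~\ref{rel} can be ``upgraded to an equality'' because the error terms $p\Delta+\Xi$ ``drop out of the relevant $H$-induced computation through Lemma~\ref{inv2}'', but there is no induced computation here --- the bijection is not defined by induction at this stage, and Lemma~\ref{inv2} (and Lemma~\ref{elem2}, which is the one that actually yields $\mu^G(x)=\mu(x)$) say nothing about the values of the extensions. The actual argument in the paper is different and is the heart of the proof: restrict the canonical extension of $\theta_{p'}$ to $Y=K\langle x\rangle$ to get $\tau$, use pickyness of $xL$ to show $\cent Xx=\cent XP$, so that the $\langle x\rangle$-Glauberman correspondent of $\theta_{p'}$ coincides with the $P$-Glauberman correspondent $\varphi_{p'}$, and then invoke Isaacs, Theorem 13.6, which gives the exact evaluation $\tau(x)=\varepsilon\,\varphi_{p'}(1)$ for a sign $\varepsilon$; on the other side $\tau^*(x)=\varphi_{p'}(1)$ because the canonical extension of $\varphi_{p'}$ to $C\langle x\rangle$ has $\langle x\rangle$ in its kernel. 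Finally $\varepsilon$ is identified with $[\theta_C,\varphi]$ mod $p$ via Isaacs, Theorem 13.14, and oddness of $p$ is used only to make that sign unique --- not, as you suggest, to remove a determinantal ambiguity in choosing the extensions. Without the appeal to Theorem 13.6 (or an equivalent exact character-value formula for the Glauberman correspondence), your argument establishes the conclusion only modulo $p$.
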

 
 \begin{proof}
 Since $L$ is central in $G$, $K$ has a central Sylow $p$-subgroup, say $L_p$, and
it follows that $K=X \times L_p$ for a unique $X\nor G$.
It follows that $\theta=\theta_{p'} \times \theta_p$, and $\varphi=\varphi_{p'} \times \theta_p$.
Notice that $\cent XP=C\cap X$ and that $\varphi_{p'}$ is the $P$-Glauberman correspondent of $\theta_{p'}$.
Notice that $\theta_{p'}$ has a canonical extension $\widehat{\theta_{p'}}$
to $V$, and $\varphi_{p'}$ has a canonical extension $\widehat{\varphi_{p'}}= \varphi_{p'} \times 1_P$ to $U=V\cap H$.
It is well-known that there is a character triple isomorphism
$(G, X, \theta_{p'}) \rightarrow (H, L_{p'}, \varphi_{p'})$ that sends $\widehat{\theta_{p'}}$
to $\widehat{\varphi_{p'}}$ (see the proof of Lemma 7.3 of \cite{T08}).  Let $$^*: \irr{G|\theta_{p'}} \rightarrow \irr{H|\varphi_{p'}}$$
be the bijection associated to the character triple isomorphism. Let $\chi \in \irr{G|\theta}$. By Gallagher's correspondence, we can write $\chi_V=\delta \widehat{\theta_{p'}}$,
where $\delta$ is a character of $V/X$ all of whose irreducible constituents lie over $\theta_p$.
By the character triple isomorphism, we have that 
$(\chi^*)_U=\delta \widehat{\varphi_{p'}}$, where we identify $\delta$ with the corresponding character
via the natural isomorphism $V/X \rightarrow U/L_{p'}$. Hence $\chi^*$ lies over $\theta_p$ and hence the restriction of ${}^*$ to $\irr{G|\theta}$ gives a bijection

$${}^*: \irr{G|\theta} \rightarrow \irr{H|\varphi}.$$ We next prove that this is the desired bijection. First since character triple isomorphisms preserve ratios of character degrees we have that $\chi(1)_p=\chi^*(1)_p$. Now, let $Y=K\langle x \rangle$, and notice that $Y/X$ is a $p$-group. Let $\tau\in{\rm Irr}(Y)$ be the canonical extension of $\theta_{p'}$ to $Y$. Now if $\chi\in{\rm Irr}(G|\theta)$ we have that $\chi_Y=\xi\tau$ for some character $\xi$ of $Y/X$ and then $\chi(x)=\xi(x)\tau(x)$. Our goal now is to show that $\chi^*(x)=\xi(x)\tau^*(x)$. By the proof of Lemma 5.17 of \cite{N18}, we have that $\chi^*(x^*)=\tau^*(x^*)\xi^*(x^*)=\tau^*(x^*)\xi(x)$. Since the isomorphism of character triples is associated to the natural map $gX\rightarrow gL_{p'}$ we have that $x^*=x$ and hence $\chi^*(x)=\xi^*(x)\tau^*(x)=\xi(x)\tau^*(x)$, as wanted.
Now, we consider the action of $S=\langle x \rangle$ on $X$. Since $xL$ is picky in $G/L$ we have that $$\cent X x/L\subseteq \cent {X/L}{xL}\subseteq\norm {X/L}{PL/L}=(C\cap X)/L=\cent X P/L$$ and hence $\cent X S=\cent X P$. Therefore the $S$-Glauberman correspondent of $\theta_{p'}$ is also $\varphi_{p'}$. Now, by Theorem 13.6 of \cite{Is76} applied to the action of $S=\langle x \rangle$ on $X$, we have that $\tau(x)=\varepsilon \varphi_{p'}(1)$ where $\varepsilon\in\{1,-1\}$. Also, notice that, since $L\langle x\rangle/L_{p'}$ is a $p$-group and $L_{p'}$ is central, we have that $L\langle x\rangle=L_{p'}\times W$ for $W\in{\rm Syl}_p(L\langle x\rangle)$. Now $L\langle x \rangle/L$ is normal in $C\langle x \rangle/L$ and hence $W$ is normal in $C\langle x \rangle$. Therefore $C\langle x\rangle=(X\cap C)\times W$ and since $\tau^*$ is the canonical extension, we have that $\tau^*=\varphi_{p'}\times 1_W$ and then  $\tau^*(x)=\varphi_{p'}(1)$. Then

 $$\chi(x)=\varepsilon\varphi_{p'}(1)\xi(x)=\varepsilon\chi^*(x).$$

 To finish the proof, we need to show that $\varepsilon$ is the unique sign such that $[\theta_C,\varphi]\equiv \varepsilon$ mod $p$. By Theorem 13.14 of \cite{Is76} we have that $\varepsilon\equiv [(\theta_{p'})_{C\cap X},\varphi_{p'}]$ mod $p$. Write $[(\theta_{p'})_{C\cap X},\varphi_{p'}]=f$. Following Theorem E of \cite{NTV} and noticing that $L$ is central (and therefore, every irreducible character of $L$ is $G$-invariant), we have that
 
 $$\theta_C=e\varphi + p\Delta,$$ and
  $$(\theta_{p'})_{C\cap X}=f\varphi_{p'} + p\Delta',$$

  where $\Delta$ and $\Delta'$ are 0 or characters of $C$ and $C\cap X$, respectively. Recall that $\theta=\theta_{p'}\times\theta_{p}$ and $\varphi=\varphi_{p'}\times\theta_p$. Then
 
 $$\theta_C=(\theta_{p'})_{C\cap X}\times \theta_p =f(\varphi_{p'}+p\Delta')\times\theta_p=f\varphi\times pf(\Delta'\times \theta_p)$$ and hence, $e=f\equiv \varepsilon$ mod $p$, as wanted. Since $p$ is odd we have that $\varepsilon$ is uniquely determined, as wanted.
 \end{proof}

\begin{thm}\label{fullyGinvxi}
Suppose that $G$ is a finite group, $p$ is an odd prime, $K,L$ are normal subgroups of $G$ such that
$K/L$ is a $p'$-group. Suppose that $V=PK$ is normal in $G$, where $P \in \syl pG$.
Let $H=L\norm GP$ and write $K\cap H=C$.
Let $\theta \in \irr K$ be $G$-invariant, and let $\varphi \in \irr C$ be the relative $P$-Glauberman correspondent
of $\theta$. Let $\xi\in{\rm Irr}(L)$ under $\varphi$, and suppose that $\xi$ is $G$-invariant. Then there exists a bijection
$$f: {\rm Irr}(G|\theta) \rightarrow {\rm Irr}(H|\varphi)$$ such that
$\chi(1)_p=f(\chi)(1)_p$ for every $\chi \in \irr{G|\theta}$. Furthermore,   if $x \in P$ is such that $xL$ is picky in $G/L$, then 
$\chi(x)=\varepsilon f(\chi)(x)$, where $\varepsilon\in\{1,-1\}$ is the unique sign such that
 $[\theta_C,\varphi]\equiv \varepsilon$ mod $p$.
 \end{thm}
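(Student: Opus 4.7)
The approach is to reduce to Theorem \ref{fullycentralcase} by passing through a character triple isomorphism that replaces $L$ by a central subgroup. Since $\xi\in\irr L$ is $G$-invariant, the triple $(G,L,\xi)$ is a character triple. As in the proof of Theorem \ref{fullycentralcase} (cf.\ Lemma 7.3 of \cite{T08}), we invoke a strong character triple isomorphism
\[
(G,L,\xi)\;\cong\;(\tilde G,\tilde L,\tilde\xi), \qquad \tilde L\sbs\zent{\tilde G},\ \tilde\xi \text{ linear}.
\]
Such an isomorphism yields an inclusion-preserving bijection between the subgroups of $G$ containing $L$ and the subgroups of $\tilde G$ containing $\tilde L$, preserving normality, indices, and the $p$-structure of the quotients.

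Transporting the data, let $\tilde K,\tilde V,\tilde H,\tilde C$ correspond to $K,V,H,C$, so that $\tilde K/\tilde L$ is a $p'$-group, $\tilde V=\tilde P\tilde K\nor\tilde G$ for a Sylow $p$-subgroup $\tilde P$ of $\tilde G$ corresponding to $P$, $\tilde H=\tilde L\norm{\tilde G}{\tilde P}$ and $\tilde C=\tilde K\cap\tilde H$. The $G$-invariant $\theta$ transfers to a $\tilde G$-invariant $\tilde\theta\in\irr{\tilde K|\tilde\xi}$, and $\varphi$ (which is $H$-invariant by Lemma \ref{inv2}) transfers to $\tilde\varphi\in\irr{\tilde C|\tilde\xi}$. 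Because the characterization of the relative Glauberman correspondent inside Theorem \ref{rel} is in terms of multiplicities of $P$-invariant constituents of restrictions, and these are respected by character triple isomorphisms, $\tilde\varphi$ is precisely the relative $\tilde P$-Glauberman correspondent of $\tilde\theta$, with $[\tilde\theta_{\tilde C},\tilde\varphi]\equiv[\theta_C,\varphi]$ mod $p$. In particular the sign $\varepsilon$ is unchanged.

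Theorem \ref{fullycentralcase} now applies to $(\tilde G,\tilde K,\tilde L,\tilde\theta,\tilde\varphi)$ and produces a bijection $\tilde f:\irr{\tilde G|\tilde\theta}\to\irr{\tilde H|\tilde\varphi}$ preserving $p$-parts of degrees and satisfying the value formula on picky $p$-elements of $\tilde G/\tilde L$. If $xL$ is picky in $G/L$ and $x\in P$, then under the isomorphism $G/L\cong\tilde G/\tilde L$ the coset $xL$ corresponds to $\tilde x\tilde L$ for some $p$-element $\tilde x\in\tilde P$; this coset is picky in $\tilde G/\tilde L$ since the Sylow structure of the quotient is preserved. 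Composing $\tilde f$ with the canonical bijections $\irr{G|\theta}\leftrightarrow\irr{\tilde G|\tilde\theta}$ and $\irr{H|\varphi}\leftrightarrow\irr{\tilde H|\tilde\varphi}$ supplied by the isomorphism produces the required $f$, with $p$-parts of degrees preserved throughout.

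The main obstacle is to verify that character values on the picky element transfer faithfully, in the sense that $\chi(x)=\tilde\chi(\tilde x)$ for corresponding $\chi\leftrightarrow\tilde\chi$. This is precisely what distinguishes a \emph{strong} character triple isomorphism in the sense of \cite{T08} from a generic one: for $p$-elements lifted canonically through the central extension, values are transported exactly, since the cocycle ambiguity present for general elements collapses on $p$-elements when the isomorphism is chosen compatibly with $\xi$ (and here we use $p$ odd to control signs). Granting this, Theorem \ref{fullycentralcase} yields
\[
\chi(x)=\tilde\chi(\tilde x)=\varepsilon\,\tilde f(\tilde\chi)(\tilde x)=\varepsilon\, f(\chi)(x),
\]
completing the proof.
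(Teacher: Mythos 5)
Your overall strategy is the same as the paper's: replace $(G,L,\xi)$ by an isomorphic character triple with central third entry and invoke Theorem \ref{fullycentralcase}. However, there is a genuine gap at exactly the step you flag as ``the main obstacle.'' You assert that for corresponding characters $\chi\leftrightarrow\tilde\chi$ one has $\chi(x)=\tilde\chi(\tilde x)$ on the picky $p$-element, attributing this to a ``strong'' character triple isomorphism for which ``the cocycle ambiguity collapses on $p$-elements.'' This is not a property of character triple isomorphisms, strong or otherwise, and it is false in general. What such an isomorphism actually gives (see the proof of Lemma 5.17 of \cite{N18}) is the ratio formula
$$\chi(x)=\frac{\tau(x)}{\tau^*(x)}\,\chi^*(x^*),$$
where $\tau$ is an extension of $\xi$ to $L\langle x\rangle$ and $\tau^*$ its image; the correction factor $\tau(x)/\tau^*(x)$ is in general not $1$ (for instance $\tau(x)$ need not even have absolute value $\xi(1)$, while $\tau^*(x^*)$ is a root of unity since $\xi^*$ is linear). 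No choice of the isomorphism ``compatibly with $\xi$'' removes this factor, and oddness of $p$ plays no role here.

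The repair is not to preserve values but to observe that the \emph{same} factor occurs on both sides of the desired identity: applying the ratio formula once to $\chi$ and once to $f(\chi)$ (with the same $\tau$) gives
$$\chi(x)=\frac{\tau(x)}{\tau^*(x)}\,\varepsilon f^*(\chi^*)(x^*) \qquad\text{and}\qquad f(\chi)(x)=\frac{\tau(x)}{\tau^*(x)}\,f^*(\chi^*)(x^*),$$
so the factors cancel in the quotient and $\chi(x)=\varepsilon f(\chi)(x)$ follows. This is precisely how the paper closes the argument. The rest of your reduction (transport of $K,V,H,C$, of $\theta$ and $\varphi$, invariance of $[\theta_C,\varphi]$ mod $p$, and preservation of pickiness and of $p$-parts of degrees) is consistent with the paper's proof.
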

 \begin{proof}

By the theory of character triples (see Corollary 5.9 of \cite{N18}, for instance), let $(G^*,L^*,\xi^*)$ be a character triple isomorphic to $(G,L,\xi)$ with $L^*\subseteq{\rm\textbf{Z}}(G^*)$ and, with a slight abuse of notation, let $${}^*:{\rm Irr}(J|\xi)\rightarrow{\rm Irr}(J^*|\xi^*)$$ be the associated bijection of characters, where $L\leq J\leq G$ and $J/L\cong J^*/L^*$ under the corresponding isomorphism of groups. By the definition of character triple isomorphisms (see Definition 5.7(a) of \cite{N18}, for instance), $\chi\in{\rm Irr}(G|\theta)$ if and only if $\chi^*\in{\rm Irr}(G^*|\theta^*)$ (and the same with $\varphi$ and $\varphi^*$), therefore ${}^*$ also defines  bijections
$${}^*:{\rm Irr}(G|\theta)\rightarrow{\rm Irr}(G^*|\theta^*)$$ and

$${}^*:{\rm Irr}(H|\varphi)\rightarrow{\rm Irr}(H^*|\varphi^*).$$

Notice that since $xL$ is picky in $G/L$, then $x^*L^*$ is picky in $G^*/L^*$. 

By Theorem \ref{fullycentralcase}, we know that the result is true for $G^*$, $K^*$, $L^*$, $H^*$, $\theta^*$ and $\varphi^*$. Let $$f^*:{\rm Irr}(G^*|\theta^*)\rightarrow{\rm Irr}(H^*|\varphi^*)$$ be the corresponding bijection. Then the composition $$f:{\rm Irr}(G|\theta)\xrightarrow[]{{}^*}{\rm Irr}(G^*|\theta^*)\xrightarrow[]{f^*}{\rm Irr}(H^*|\varphi^*)\xrightarrow[]{{{}^{*}}^{-1}}{\rm Irr}(H|\varphi)$$ is a bijection. Since ratios of character degrees are preserved by character triple isomorphisms, we have that $\chi(1)_p=f(\chi)(1)_p$ for every $\chi\in{\rm Irr}(G|\theta)$, using that $\theta(1)_p=\varphi(1)_p$ by Corollary 11.29 of \cite{Is76}.

We need to check that $\chi(x)=\varepsilon f(\chi)(x)$ where $\varepsilon$ is the unique sign such that $[\theta_L,\varphi]\equiv \varepsilon$ mod $p$. Let $\tau$ be an extension of $\xi$ to $L\langle x\rangle$. By the proof of Lemma 5.17 of \cite{N18}, and using that the result is true for $G^*$,  we have that

$$\chi(x)=\frac{\tau(x)}{\tau^*(x)}\chi^*(x^*)=\frac{\tau(x)}{\tau^*(x)}\varepsilon f^*(\chi^*)(x^*),$$ where $\varepsilon\in\{1,-1\}$ is the unique sign such that $[\theta_L,\varphi]=[(\theta^*)_{L^*},\varphi^*]\equiv \varepsilon$ mod $p$. By the same argument,

$$f(\chi)(x)=\frac{\tau(x)}{\tau^*(x)}f^*(\chi^*)(x^*)$$ and we conclude that $$\chi(x)=\varepsilon f(\chi)(x),$$ as wanted. 
 \end{proof}
 
Next we want to remove the hypothesis of $\xi$ being $G$-invariant in Theorem \ref{fullyGinvxi}. To do so, first we need the following easy lemma.

\begin{lem}\label{lem:inductiondiamant} Let $G$ be a finite group, let $K\lhd G$ and let $H\leq G$ such that $G=KH$. Write $M=K\cap H$. Let $\delta\in{\rm Irr}(H)$, then
$$\delta^G(h)=\frac{1}{|M|}\sum_{k\in K\atop khk^{-1}\in H}\delta(khk^{-1}).$$
\end{lem}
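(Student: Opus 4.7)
The plan is to derive the formula directly from the standard induced character formula by using a transversal of $H$ in $G$ that lies inside $K$. Since $K$ is normal in $G$ and $G = KH$, every left coset of $H$ in $G$ admits a representative from $K$, unique modulo $M = K \cap H$ on the right. So, choosing a left transversal $T \subseteq K$ with $K = \bigsqcup_{t \in T} tM$, I would check that $G = \bigsqcup_{t \in T} tH$: the union is $G$ because $MH = H$, and disjointness follows because $tH = t'H$ with $t,t' \in K$ forces $t'^{-1}t \in K \cap H = M$, so $tM = t'M$. In particular $|T| = [K:M] = [G:H]$.

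Given this transversal, I would apply the transversal form of the induced character formula, namely $\delta^G(h) = \sum_{t \in T} \dot\delta(t^{-1}ht)$, where $\dot\delta$ denotes the function on $G$ which agrees with $\delta$ on $H$ and vanishes off $H$. This form is immediate from $\delta^G(h) = \tfrac{1}{|H|}\sum_{g \in G}\dot\delta(g^{-1}hg)$ by writing each $g = th'$ with $t \in T$ and $h' \in H$: conjugation by $h' \in H$ preserves both membership in $H$ and the value of $\delta$, so $\dot\delta(h'^{-1}t^{-1}hth') = \dot\delta(t^{-1}ht)$, the summand depends only on $t$, and the sum over $h' \in H$ cancels the factor $1/|H|$.

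The main step is then to inflate the sum over $T$ to a sum over $K$. Each $k \in K$ is uniquely $k = tm$ with $t \in T$ and $m \in M$, and $k^{-1}hk = m^{-1}(t^{-1}ht)m$. Since $m \in H$ and $\dot\delta$ is invariant under $H$-conjugation, each of the $|M|$ terms indexed by $m$ equals $\dot\delta(t^{-1}ht)$, giving $\sum_{k \in K}\dot\delta(k^{-1}hk) = |M|\,\delta^G(h)$. The change of variable $k \mapsto k^{-1}$ on $K$, together with dropping the vanishing terms (those $k$ with $khk^{-1} \notin H$), then yields the claimed formula. There is really no obstacle here beyond keeping the left/right coset conventions consistent and observing that $\dot\delta$ is an $H$-class function on all of $G$.
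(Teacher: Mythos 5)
Your proof is correct and follows essentially the same route as the paper's: both reduce the averaged sum over $G=KH$ to a sum over $K$ by exploiting that $\delta$ (extended by zero) is invariant under $H$-conjugation, with the factor $1/|M|$ coming from $|G|=|K||H|/|M|$. The only cosmetic difference is that you decompose $g=th'$ with a transversal $T\subseteq K$ of $H$ in $G$ and then inflate $T$ to $K$, whereas the paper writes $g=h_ik$ with coset representatives of $K$ chosen in $H$; the arithmetic is identical.
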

\begin{proof} Write $\{h_1,\ldots,h_r\}$ for a complete set of coset representatives of $K$ in $G$, and notice that $r=|G:K|$. Then, for every $g\in G$, there exists a unique $i\in\{1,\ldots, r\}$ and a unique $k\in K$ such that $g=h_ik$. Notice that $ghg^{-1}\in H$ if, and only if, $khk^{-1}\in H$. Hence, we can write:
$$\delta^G(h)=\frac{1}{|H|}\sum_{g\in G\atop ghg^{-1}\in H}\delta(ghg^{-1})=\frac{|G:K|}{|H|}\sum_{k\in K\atop khk^{-1}\in H}\delta(khk^{-1})=\frac{1}{|M|}\sum_{k\in K\atop khk^{-1}\in H}\delta(khk^{-1})$$
\end{proof}

The following is the main result of this section and the key to proving Theorem A.

\begin{thm}\label{anticfully}
Suppose that $G$ is a finite group, $p$ is an odd prime, $K,L$ are normal subgroups of $G$ such that
$K/L$ is a $p'$-group. Suppose that $V=PK$ is normal in $G$, where $P \in \syl pG$.
Let $H=L\norm GP$ and write $K\cap H=C$.
Let $\theta \in \irr K$ be $G$-invariant, and let $\varphi \in \irr C$ be the relative $P$-Glauberman correspondent
of $\theta$. Then there exists a bijection
$$f: {\rm Irr}(G|\theta) \rightarrow {\rm Irr}(H|\varphi)$$ such that
$\chi(1)_p=f(\chi)(1)_p$ for every $\chi \in \irr{G|\theta}$. Furthermore,   if $x \in P$ is such that $xL$ is picky in $G/L$, then 
$\chi(x)=\varepsilon f(\chi)(x)$, where $\varepsilon\in\{1,-1\}$ is the unique sign such that
 $[\theta_C,\varphi]\equiv \varepsilon$ mod $p$.
 \end{thm}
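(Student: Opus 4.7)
The plan is to reduce to Theorem \ref{fullyGinvxi} via Clifford correspondence in $\xi$. Since $\varphi$ is $P$-invariant, Lemma \ref{tom}(a) supplies a $P$-invariant constituent $\xi\in\irr L$ of $\varphi_L$, which then also lies under $\theta$. Set $T=G_\xi$, $K_0=K\cap T$, $C_0=C\cap T=C_\xi$, $H_0=H\cap T=L\norm T P$, and let $\theta_0\in\irr{K_0\vert\xi}$ and $\varphi_0\in\irr{C_0\vert\xi}$ be the Clifford correspondents of $\theta$ and $\varphi$ over $\xi$. Now $\xi$ is $T$-invariant and $\theta_0$ is $T$-invariant (as $\theta$ is $G$-invariant), placing us in the setting of Theorem \ref{fullyGinvxi} inside $T$.

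First I would verify that the hypotheses transfer: the normality, $p'$-quotient and Sylow/normalizer conditions pass from $G$ to $T$ (for instance $V\cap T=PK_0\nor T$ and $P\in\syl p T$). The key step is to show that $\varphi_0$ is the relative $P$-Glauberman correspondent of $\theta_0$, with the same sign $\varepsilon$. Frobenius reciprocity together with Mackey's formula yields
\[
[\theta_C,\varphi]=[\theta_{C_0},\varphi_0]=[(\theta_0)_{C_0},\varphi_0],
\]
the Mackey sum collapsing because for coset representatives $g\in K\setminus K_0$ the contribution to $(\theta_0^K)_{C_0}$ lies over the $C_0$-orbit of $\xi^g$, which does not contain $\xi$. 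Hence $\varphi_0$ occurs in $(\theta_0)_{C_0}$ with multiplicity $\equiv\varepsilon\not\equiv 0\pmod p$; since $\varphi_0$ lies over the $P$-invariant $\xi$, the uniqueness in Theorem \ref{rel} identifies $\varphi_0$ as the Glauberman correspondent of $\theta_0$ with sign $\varepsilon$.

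Applying Theorem \ref{fullyGinvxi} in $T$ gives a bijection $\tilde f\colon\irr{T\vert\theta_0}\to\irr{H_0\vert\varphi_0}$. Composing with the Clifford correspondences $\irr{G\vert\theta}\leftrightarrow\irr{T\vert\theta_0}$ and $\irr{H_0\vert\varphi_0}\leftrightarrow\irr{H\vert\varphi}$ (both realised by induction) defines the desired $f$. The Frattini-type equalities $G=KT$ (from $\theta$ being $G$-invariant) and $H=CH_0$ (from $\varphi$ being $H$-invariant, via Lemma \ref{inv2}) give $|G:T|=|K:K_0|$ and $|H:H_0|=|C:C_0|$, both $p'$-numbers, so the $p$-part of the degree is preserved.

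The main obstacle is the character value identity. The picky hypothesis, combined with the equality $\cent{K/L}P=C/L$ (from $P\cap K\leq L$ using the $p'$-quotient) and the inclusion $\cent{K/L}x\leq N_{K/L}(PL/L)=C/L$, forces $\cent{K/L}x=C/L$. Then Lemma \ref{tom}(a) shows that every constituent of $\varphi_L$ is $x$-invariant and every $x$-invariant constituent of $\theta_L$ lies in the $C$-orbit of $\xi$. Applying Lemma \ref{lem:inductiondiamant} to $\chi=\psi^G$ and $f(\chi)=\tilde f(\psi)^H$ then gives
\[
\chi(x)=\frac{1}{|C_0|}\sum_{c\in C}\psi(cxc^{-1}),\qquad f(\chi)(x)=\frac{1}{|C_0|}\sum_{c\in C}\tilde f(\psi)(cxc^{-1}).
\]
For each $c\in C$, $cxc^{-1}$ is a $p$-element of $PL$ and hence $L$-conjugate to some $x'\in P$; since the conjugating element lies in $L\leq H_0\leq T$ and $x'L$ remains picky in $T/L$, Theorem \ref{fullyGinvxi} applied to $x'$ gives $\psi(cxc^{-1})=\psi(x')=\varepsilon\tilde f(\psi)(x')=\varepsilon\tilde f(\psi)(cxc^{-1})$, and summing over $c$ yields $\chi(x)=\varepsilon f(\chi)(x)$.
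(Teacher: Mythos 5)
Your proposal is correct and follows essentially the same route as the paper: a Clifford-theoretic reduction over a $P$-invariant $\xi\in\irr L$ to the invariant-$\xi$ case handled by Theorem \ref{fullyGinvxi}, with Lemma \ref{lem:inductiondiamant} and the pickiness-forced coset decomposition $\{k\in K \mid kxk^{-1}\in G_\xi\}=K_\xi C$ controlling the character values of the induced characters. The paper packages the reduction as an induction on $|G:L|$ and identifies the relative Glauberman correspondent of $\theta_\xi$ by restricting the decomposition of $\theta_C$ rather than via Mackey, but these are only cosmetic differences.
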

 
 \smallskip
 
\begin{proof}~~ First we remark that the last assertion makes sense. Indeed, $\cent {K/L}P=C/L$ and by Theorem E of \cite{NTV}  we have that $[\theta_C,\varphi]\equiv \pm 1$ mod $p$.

We proceed by induction on $|G:L|$. By Lemma \ref{tom}, let $\xi\in{\rm Irr}(L)$ be $P$-invariant under $\varphi$ (and hence, under $\theta$). Suppose that $G_\xi< G$. Since $\theta$ is $G$-invariant, we have that $G=KG_\xi$. Notice that $G_\xi=K_\xi H_\xi$ and write $C_\xi=K_\xi\cap H_\xi$. Let $\theta_\xi\in{\rm Irr}(K_\xi|\xi)$ be the Clifford correspondent of $\theta$ over $\xi$, and let $\varphi_\xi\in{\rm Irr}(C_\xi|\xi)$ the Clifford correspondent of $\varphi$ over $\xi$. Notice that $\theta_\xi$ is $G_\xi$-invariant. Indeed, let $h\in G_\xi$, then $\theta_\xi^h$ lies under $\theta^h=\theta$ and over $\xi^h=\xi$, and by the Clifford correspondence (see Theorem 6.11 (c) of \cite{Is76}) we have that $\theta_\xi=\theta_\xi^h$. In the same way we have that $\varphi_\xi$ is $H_\xi$-invariant. We claim that $\varphi_\xi$ is the relative Glauberman correspondence of $\theta_\xi$. Indeed, 

$$\theta_{C_\xi}=(\theta_{K_\xi})_{C_\xi}=(\theta_\xi)_{C_\xi}+T_{C_\xi},$$ where the irreducible constituents of $T$ do not lie over $\xi$. Now write $\theta_C=e\varphi+p\Delta+\Xi$ as in Theorem \ref{rel}. Then

$$\theta_{C_\xi}=(\theta_C)_{C_\xi}=(e\varphi+p\Delta+\Xi)_{C_\xi}=e\varphi_\xi+p\Delta_{C_\xi}+\Xi_{C_\xi}+R,$$ where the irreducible constituents of $R$ do not lie over $\xi$. Since $\xi$ is $P$-invariant, no irreducible constituent of $\Xi$ lies over $\xi$ either. Therefore $$(\theta_\xi)_{C_\xi}=e\varphi_\xi+p\Delta'_{C_\xi},$$ where $\Delta'_{C_\xi}$ is a character of $C_\xi$ or zero, and the claim is proved.

 By induction there is a bijection satisfying all the desired properties:

$$\tilde f:{\rm Irr}(G_\xi|\theta_\xi)\rightarrow{\rm Irr}(H_\xi|\varphi_\xi).$$

We next claim that induction of characters defines a bijection

$${\rm Irr}(G_\xi|\theta_\xi)\rightarrow{\rm Irr}(G|\theta).$$

First, if $\psi\in {\rm Irr}(G_\xi|\theta_\xi)\subseteq{\rm Irr}(G_\xi|\xi)$, we have that $\psi^G\in{\rm Irr}(G|\xi)$. Now by Problem 5.2 of \cite{Is76}, we have that $[(\psi^G)_K,\theta]=[(\psi_{K_\xi})^K,\theta]\neq 0$ (we are using that  $\psi$ lies over $\theta_\xi$ and $\theta_\xi^K=\theta$). We conclude that $\psi^G\in{\rm Irr}(G|\theta)$ and the map is well-defined. The injectivity follows from the injectivity of the Clifford correspondence. Finally, let $\chi\in{\rm Irr}(G|\theta)\subseteq{\rm Irr}(G|\xi)$ and let $\psi\in{\rm Irr}(G_\xi|\xi)$ such that $\psi^G=\chi$. Since $\theta$ is $G$-invariant we have that $(\psi^G)_K=\chi_K=f\theta$ for some $f$. Since $\psi$ lies over $\xi$, every irreducible constituent of $\psi_{K_\xi}$ lies over $\xi$. Since $(\psi^G)_K=(\psi_{K_\xi})^K$ we conclude that $\psi$ lies over $\theta_\xi$ by Theorem 6.11 (c) of \cite{Is76}, as wanted. In the same way, induction defines a bijection

$${\rm Irr}(H_\xi|\varphi_\xi)\rightarrow{\rm Irr}(H|\varphi).$$ 

Composing these bijections we obtain a bijection

$$f:{\rm Irr}(G|\theta)\rightarrow{\rm Irr}(H|\varphi)$$
 by sending $\chi\in{\rm Irr}(G|\theta)$ to $f(\chi)=\tilde{f}(\psi)^H$, where $\chi=\psi^G$. Now our goal is to prove that, if $\psi\in{\rm Irr}(G_\xi|\theta_\xi)$, then $$\chi(x)=\psi^G(x)=\varepsilon\tilde f(\psi)^H(x)=\varepsilon f(x).$$

Recall that $G=KG_\xi$ and $K_\xi=K\cap G_\xi$. By Lemma \ref{lem:inductiondiamant} we have that

$$\psi^G(x)=\frac{1}{|K_\xi|}\sum_{k\in K\atop kxk^{-1}\in G_\xi}\psi(kxk^{-1}).$$ 

Using that $H=CH_\xi$, writing $C_\xi=C\cap H_\xi$, and applying again Lemma \ref{lem:inductiondiamant} we have that

$$\tilde f(\psi)^H(x)=\frac{1}{|C_\xi|}\sum_{c\in C\atop cxc^{-1}\in H_\xi}\tilde f(\psi)(cxc^{-1})=\frac{1}{|C_\xi|}\sum_{c\in C}\tilde f(\psi)(cxc^{-1}),$$ where the last equality follows from the fact that, if $c\in C$, then $x^cL=xL$ and hence $x^c=xl\in H_\xi$ for every $c\in C$. Now write $\Delta=\{k\in K\>|\> kxk^{-1}\in G_\xi\}$, so we have that

$$\psi^G(x)=\frac{1}{|K_\xi|}\sum_{k\in \Delta}\psi(kxk^{-1}).$$ 

Write $C=C_\xi c_1\sqcup C_\xi c_2\sqcup\ldots\sqcup C_\xi c_s$ as a disjoint union, where $c_i\in C$. We claim that

$$\Delta=K_\xi c_1\sqcup\ldots\sqcup K_\xi c_s.$$ Indeed, let $k\in \Delta$, so that $kxk^{-1}\in G_\xi$. Since $G_\xi/K_\xi$ has a normal Sylow subgroup, say $PK_\xi/K_\xi$, we have that $kxk^{-1}\in K_\xi P$ and since $PL/L$ is a Sylow $p$-subgroup of $K_\xi P/L$, there exists $v\in K_\xi$ such that $vkxk^{-1}v^{-1}\in PL$. Since $xL$ is picky in $G/L$, we have that $vkL\in\norm {G/L}{PL/L}$ and hence $vk\in \norm G P L=H$. But then $vk\in H\cap K=C$. Write $vk=c$ for some $c\in C$ and write $c=c'c_i$ for some $c'\in C_\xi$. Then $k=v^{-1}c'c_i\in K_\xi c_i$. The reverse inclusion is obvious. Finally, suppose that $k\in \Delta$ and write $k=k_ic_i=k_jc_j$ with $k_i,k_j\in K_\xi$. Then $c_jc_i^{-1}=k_j^{-1}k_i\in K_\xi\cap C=C_\xi$ and we can write $c_j=cc_i$ for some $c\in C_\xi$. We conclude that $i=j$, and the claim is proved. Now,

$$\psi^G(x)=\frac{1}{|K_\xi|}\sum_{j=1}^s\sum_{k\in K_\xi}\psi(kc_jxc_j^{-1}k^{-1})= \sum_{j=1}^s\psi(c_jxc_j^{-1})=\frac{\varepsilon}{|C_\xi|}\sum_{c\in C}\tilde f(\psi)(cxc^{-1}),$$ where we have used in the last equality that $c_jxc_j^{-1}\in H$ and $c_jxc_j^{-1}L$ is also picky in $G/L$, $H=L\norm G P=L\norm G P^c$ for every $c\in C$ and $\varphi$ is also the relative $P^c$-Glauberman correspondent of $\theta$ for every $c\in C$. Therefore $\psi(c_jxc_j^{-1})=\varepsilon \tilde f(\psi)(c_jxc_j^{-1})$. We conclude that 

$$\psi^G(x)=\varepsilon \tilde f(\psi)^H(x),$$ as wanted. This proves the result in this case. Hence we may assume that $\xi$ is $G$-invariant, and we apply Theorem \ref{fullyGinvxi} to conclude.

\medskip

\end{proof}

 \section{Proof of Theorem A}
 
 We start with a very elementary lemma.
 
  \begin{lem}
\label{subg}
Let $G$ be a finite group and let $x\in H\leq G$. If $x$ is picky in $G$, then $x$ is picky in $H$.
\end{lem}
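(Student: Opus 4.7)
The plan is to show that every Sylow $p$-subgroup of $H$ containing $x$ must coincide with $P \cap H$, where $P$ denotes the unique Sylow $p$-subgroup of $G$ in which $x$ lies.

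First I would observe that $x$ is automatically a $p$-element of $H$, so the notion of being picky in $H$ is at least meaningful. Next, let $Q$ be any Sylow $p$-subgroup of $H$ containing $x$. The key observation is that $Q$ is itself a $p$-subgroup of $G$ containing $x$, so it is contained in some Sylow $p$-subgroup $P'$ of $G$. Since $x \in P'$ and $x$ is picky in $G$, we must have $P' = P$, and therefore $Q \subseteq P \cap H$.

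Now $P \cap H$ is a $p$-subgroup of $H$ sitting above $Q$, and $Q$ is already a Sylow $p$-subgroup of $H$, which forces $Q = P \cap H$. Thus every Sylow $p$-subgroup of $H$ that contains $x$ is equal to $P \cap H$, giving uniqueness.

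There is no real obstacle here; the argument is a direct consequence of the definition of picky, and the only thing worth emphasizing is that pickiness in $G$ actually controls \emph{all} $p$-subgroups of $G$ containing $x$, not merely the Sylow ones, which is precisely what allows us to pass from a Sylow $p$-subgroup of $H$ to one sitting inside $P$.
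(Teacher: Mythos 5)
Your proof is correct and follows essentially the same route as the paper: embed an arbitrary Sylow $p$-subgroup $Q$ of $H$ containing $x$ into a Sylow $p$-subgroup of $G$, use pickiness to identify that overgroup with $P$, and conclude $Q = P \cap H$. The only difference is that you spell out the maximality step $Q \subseteq P\cap H \Rightarrow Q = P\cap H$, which the paper leaves implicit.
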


\begin{proof}
Let $Q$ be a Sylow $p$-subgroup of $H$ such that $x\in Q$. Since $x$ is picky as an element of $G$, there exists a unique Sylow $p$-subgroup $P$ of $G$ containing $Q$. Therefore, $Q=P\cap H$ and it follows that $Q$ is unique.
\end{proof}

\begin{thm}\label{mainpodd}
Suppose that $G$ is a finite group, $p$ is an odd prime, $K,L$ are normal subgroups of $G$ such that
$K/L$ is a $p'$-group. Suppose that $V=PK$ is normal in $G$, where $P \in \syl pG$.
Let $H=L\norm GP$ and write $K\cap H=C$.
Let $xL\in PL/L$ be a picky $p$-element in $G/L$. Let $\theta \in \irr K$ be $x$-invariant, and let $\varphi \in \irr C$
be the relative $\langle x \rangle$-Glauberman correspondent of $\theta$.  Then there exists a bijection
$$^*: {\rm Irr}(G|\theta) \rightarrow  {\rm Irr}(H|\varphi)$$ and a sign $\epsilon_x$ such that
$\chi(x)=\epsilon_x \chi^*(x)$ and $\chi(1)_p=\chi^*(1)_p$ for every $\chi \in \irr{G|\theta}$.
\end{thm}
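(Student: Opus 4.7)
The plan is to reduce to Theorem~\ref{anticfully} applied to the stabilizer $T = G_\theta$, via Clifford correspondence both on the $\theta$-side (in $G$) and on the $\varphi$-side (in $H$, using $H_\varphi = H_\theta$ from Lemma~\ref{inv2}). The crucial identification is $L\norm T{P_0} = H_\theta$ for $P_0 = P \cap T$, and this is precisely where picky-ness of $xL$ is essential.

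Let $T = G_\theta$ and $P_0 = P \cap T$. A standard Sylow argument shows $P_0 \in \syl p T$ and $P_0 K = PK \cap T$ is normal in $T$; picky-ness gives $\cent{K/L}{P_0} = \cent{K/L}x = C/L$, so $\varphi$ is also the relative $P_0$-Glauberman correspondent of $\theta$. The crucial claim $L\norm T{P_0} = H_\theta$ is shown as follows: the inclusion $\supseteq$ is immediate since $T \cap \norm GP \le \norm T{P_0}$; for the reverse, if $n \in \norm T{P_0}$ then $P^n$ is a Sylow of $G$ containing $P_0$, so $xL \in P_0L/L \subseteq P^nL/L$, and picky-ness of $xL$ in $G/L$ forces $P^nL = PL$, hence $P^n = P^l$ for some $l \in L$ (all Sylows of $PL$ being $L$-conjugate to $P$), so $nl^{-1} \in \norm GP$ and $n \in L\norm GP \cap T = H_\theta$.

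Theorem~\ref{anticfully} applied to $T$ then yields a bijection $f_0\colon \irr{T|\theta} \to \irr{H_\theta|\varphi}$ with $\psi(1)_p = f_0(\psi)(1)_p$ and $\psi(y) = \epsilon f_0(\psi)(y)$ for every $y \in P_0$ with $yL$ picky in $T/L$, where $\epsilon$ is the sign from $[\theta_C, \varphi] \bmod p$. Composing with Clifford correspondence (which gives bijections $\irr{G|\theta} \leftrightarrow \irr{T|\theta}$ and $\irr{H|\varphi} \leftrightarrow \irr{H_\theta|\varphi}$, both via induction) defines $f$ by $f(\psi^G) = f_0(\psi)^H$. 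Since $G = HK$ and $T = KH_\theta$ (Dedekind), $|G:T| = |H:H_\theta|$, so $\chi(1)_p = |G:T|_p \psi(1)_p = |H:H_\theta|_p f_0(\psi)(1)_p = \chi^*(1)_p$.

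For the character value at $x$, use a common transversal $\{h_i\} \subseteq H$ of $T$ in $G$ (equivalently of $H_\theta$ in $H$). For each $h_i$, the condition $h_i^{-1}xh_i \in T$ is equivalent to $h_i^{-1}xh_i \in H_\theta$ (since $h_i^{-1}xh_i \in H$). Writing $h_i = lh'$ with $h' \in \norm GP$ shows that $y_i := h_i^{-1}xh_i$ satisfies $y_iL \in P_0L/L$; since $y_i$ is a $p$-element of $T$ in $P_0L$, Sylow's theorem in $P_0L$ gives $y_i = l'^{-1}y_i''l'$ for some $l' \in L$, $y_i'' \in P_0$, and $L$-invariance of class functions on $T$ and $H_\theta$ combined with Theorem~\ref{anticfully} applied at $y_i'' \in P_0$ yields $\psi(y_i) = \epsilon f_0(\psi)(y_i)$. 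Summation in the induction formulas gives $\psi^G(x) = \epsilon f_0(\psi)^H(x)$. The main obstacle is the identification $L\norm T{P_0} = H_\theta$; without picky-ness of $xL$ the normalizer $\norm T{P_0}$ can strictly exceed $L(T \cap \norm GP)$ and the reduction breaks.
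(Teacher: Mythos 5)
Your proposal is correct and follows essentially the same route as the paper's proof: reduce to $T=G_\theta$ via the Clifford correspondence on both sides (using $H_\theta=H_\varphi$ from Lemma~\ref{inv2}), apply Theorem~\ref{anticfully} to $T$ with Sylow subgroup $P_0=P\cap T$, and match the terms of the two induction formulas using that conjugates of $xL$ remain picky in every subgroup containing them (Lemma~\ref{subg}). In fact you make explicit two points the paper leaves implicit, namely the identification $L\norm T{P_0}=H_\theta$ and the $L$-conjugation moving $y_i\in P_0L$ into $P_0$ before invoking Theorem~\ref{anticfully}, both of which are handled correctly.
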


\begin{proof} Since $xL$ is picky in $G/L$, we have that $\cent{K/L} x=C/L=\cent {K/L}P$
by Lemma \ref{elem2}. Write $T=G_\theta$.  By Lemma \ref{inv2}, we know that $H_\theta=H_\varphi$. 
Also, $xL$ is picky in $T/L$. Let $P_\theta=P\cap T$, and notice that $C/L=\cent {K/L}{P_\theta}$. Then $\varphi$ is the relative $P_\theta$-Glauberman correspondent of $\theta$ and by Theorem \ref{anticfully},
there is a bijection
$$\tilde{}: \irr{T|\theta} \rightarrow \irr{H_\varphi|\varphi}$$ such that $\psi(1)_p=\tilde\psi(1)_p$, and $\psi(y)=\epsilon \tilde\psi(y)$,
where $[\theta_C, \varphi] \equiv \epsilon$ mod $p$, for all $\psi \in \irr{T|\theta}$, whenever $yL$
is a picky $p$-element of $T/L$.
Now, we define $$(\psi^G)^*=\tilde\psi^H$$
for $\psi \in \irr{T|\theta}$. 
We only need to check that 
$$\psi^G(x)=\epsilon \tilde \psi^H(x) \, .$$
Suppose that $h \in H$ is such that $y=hxh^{-1} \in H_\varphi$.
Since $xL$ is picky in $G/L$,
we have that $yL$ is picky in $G/L$ and hence $yL$ is picky in every subgroup that contains it
by Lemma \ref{subg}.
We conclude that
$$\psi^G(x)=(\psi_{H_\varphi})^H (x)={1 \over |H_\varphi|} \sum_{h \in H \atop hxh^{-1} \in H_\varphi} \psi(hxh^{-1})=
{1 \over |H_\varphi|} \sum_{h \in H \atop hxh^{-1} \in H_\varphi} \epsilon \tilde \psi(hxh^{-1})$$
$$=\epsilon \tilde\psi^H(x), $$
as wanted. 
\end{proof}

\begin{lem}
\label{xinv}
Let $G$ be a finite group, let $P\in{\rm Syl}_p(G)$ and let $x\in P$. Let $N$ be a normal subgroup of $G$. If $\chi\in{\rm Irr}^x(G)$ then there exists $\theta\in\irr N$ lying under $\chi$ that is $\langle x\rangle$-invariant. Furthermore if $x$ is picky, then  any two such characters are $\norm G P$-conjugate.
\end{lem}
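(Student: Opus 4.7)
The plan is to split the lemma into its two assertions: existence of an $\langle x\rangle$-invariant constituent, handled by the Clifford correspondence and the induction formula; and uniqueness (when $x$ is picky), handled by a two-step Sylow-conjugation argument that exploits the pickiness of $x$.

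For existence, I will pick any irreducible constituent $\theta \in \irr N$ of $\chi_N$, let $T = G_\theta$ be its inertia group, and take $\psi \in \irr{T \mid \theta}$ the Clifford correspondent, so that $\chi = \psi^G$. The induction formula then gives
\[
\chi(x) \;=\; \psi^G(x) \;=\; \frac{1}{|T|}\sum_{\substack{g \in G \\ gxg^{-1} \in T}} \psi(gxg^{-1}).
\]
Hence the hypothesis $\chi(x) \ne 0$ forces some $g \in G$ with $gxg^{-1} \in T = G_\theta$. But $\theta^{gxg^{-1}} = \theta$ is equivalent to $(\theta^g)^x = \theta^g$, so the $G$-conjugate $\theta^g$ is the desired $\langle x \rangle$-invariant constituent of $\chi_N$.

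For uniqueness, suppose $\theta_1, \theta_2 \in \irr N$ are two $\langle x \rangle$-invariant constituents of $\chi_N$. By Clifford's theorem they lie in a single $G$-orbit, so $\theta_2 = \theta_1^g$ for some $g \in G$, and from $\theta_2^x = \theta_2$ one deduces immediately that $gxg^{-1} \in T_1 := G_{\theta_1}$, which also contains $x$. Let $Q$ be a Sylow $p$-subgroup of $T_1$ with $x \in Q$; by Sylow's theorem inside $T_1$ there exists $t \in T_1$ with $t(gxg^{-1})t^{-1} \in Q$. Since $x$ is picky in $G$ and $Q$ is a $p$-subgroup of $G$ containing $x$, $Q$ lies inside the unique Sylow $p$-subgroup $P$ of $G$ containing $x$. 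Setting $u := tg$, we thus have $x,\, uxu^{-1} \in P$. Now $uxu^{-1}$ is itself picky in $G$ and belongs to both $P$ and $uPu^{-1}$, so pickiness forces $uPu^{-1} = P$, i.e., $u \in \norm G P$. Since $t \in T_1$ fixes $\theta_1$, we conclude
\[
\theta_2 \;=\; \theta_1^g \;=\; \theta_1^{t^{-1}u} \;=\; \theta_1^u,
\]
exhibiting the desired $\norm G P$-conjugacy.

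The main obstacle is the uniqueness statement: one must upgrade an arbitrary $G$-conjugation $\theta_2 = \theta_1^g$ into one realized by an element of $\norm G P$. The trick is first to use Sylow's theorem inside $T_1$ to move $gxg^{-1}$ into a Sylow of $T_1$, which by the pickiness of $x$ automatically lies inside $P$; then the pickiness of $uxu^{-1}$ forces the corrected element $u = tg$ into $\norm G P$, while absorbing the correction $t$ into the stabilizer $T_1$ leaves $\theta_1$ unchanged.
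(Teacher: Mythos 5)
Your proof is correct and follows essentially the same route as the paper's: existence via the Clifford correspondent and the induction formula, and uniqueness via a Sylow-conjugation inside the inertia group followed by an appeal to pickiness to force the conjugating element into $\norm GP$. The only cosmetic difference is that in the uniqueness step you conjugate the element $gxg^{-1}$ into a Sylow $p$-subgroup of $G_{\theta_1}$ and invoke the pickiness of $uxu^{-1}$, whereas the paper conjugates the Sylow $p$-subgroups $Q_i=P\cap G_{\theta_i}$ of the two inertia groups and invokes the pickiness of $x$ itself; the two bookkeepings are equivalent.
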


\begin{proof}
Let $\theta\in\irr N$ lying under $\chi$. Put $T=G_{\theta}$. Let $\psi\in\irr T$ be the Clifford correspondent of $\chi$ over $\theta$. Therefore,
$$
0\neq\chi(x)=\frac{1}{|T|}\sum_{g\in G}\psi^o(gxg^{-1})
$$
so there exists $g\in G$ such that $gxg^{-1}\in T=G_{\theta}$. It follows that $x$ fixes $\theta^{g}$.

Now, assume that $x$ is picky and let $\theta_1,\theta_2\in\irr N$ be $\langle x\rangle$-invariant lying under $\chi$. There exists $g\in G$ such that $\theta_1=\theta_2^g$. Since $x\in G_{\theta_i}$ for $i=1,2$, $x$ is picky as an element of $G_{\theta_i}$  by Lemma \ref{subg}. Let $Q_i\in{\rm Syl}_p(G_{\theta_i})$ be the unique Sylow subgroup of $G_{\theta_i}$ that contains $x$. We have that $Q_i=P\cap G_{\theta_i}$  for $i=1,2$. Thus $Q_1^{g^{-1}},Q_2\subseteq G_{\theta_2}$ are both Sylow subgroups, whence there exists $h\in G_{\theta_2}$ such that $Q_1^{g^{-1}h}=Q_2$.  Hence $x$ belongs to both $P$ and $P^{g^{-1}h}$. Since $x$ is picky, we have that $g^{-1}h\in\norm G P$ and therefore $g=hy$ for some $y\in\norm G P$. Now $\theta_1=\theta_2^g=\theta_2^{y}$ and it follows that $\theta_1$ and $\theta_2$ are $\norm G P$-conjugate, as wanted.
\end{proof}

We need one more lemma.
\begin{lem}
Let $G$ be a finite group and let $N$ be a normal subgroup of $G$.
Let $x \in G$ be a picky  $p$-element in $G$.  Then $xN$ is a picky $p$-element in $G/N$.
\end{lem}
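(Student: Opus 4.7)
The claim that $xN$ is a $p$-element of $G/N$ is immediate: the order of $xN$ divides the order of $x$. So the real content is the uniqueness: I must show that if $S$ is any Sylow $p$-subgroup of $G/N$ containing $xN$, then $S = PN/N$, where $P$ is the unique Sylow $p$-subgroup of $G$ containing $x$. Since every Sylow $p$-subgroup of $G/N$ has the form $QN/N$ for some $Q \in \syl pG$, it suffices to show $QN = PN$ whenever $x \in QN$.

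My plan is to apply Sylow's theorem inside the subgroup $QN$. Observe that $Q$ is a Sylow $p$-subgroup of $QN$ (since $|Q|=|G|_p\ge|QN|_p$), and $x$ is a $p$-element of $QN$, so there is $y \in QN$ with $x \in Q^y$. Writing $y = qn$ with $q\in Q$ and $n\in N$, conjugation gives $Q^y = Q^{qn} = Q^n$, so in fact $x \in Q^n$ for some $n \in N$. The key trick here is precisely this replacement of $y$ by its $N$-component, which I expect to be the only delicate step in the argument.

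Now $Q^n$ is a Sylow $p$-subgroup of $G$ containing $x$, so pickiness forces $Q^n = P$. This yields $Q = nPn^{-1}$, whence
$$QN \;=\; nPn^{-1}N \;=\; n(PN)n^{-1},$$
where the last equality uses the normality of $N$ (so that $n^{-1}N = Nn^{-1}$). Finally, since $n \in N \subseteq PN$ and $PN$ is a subgroup of $G$, conjugation by $n$ fixes $PN$ setwise, giving $QN = PN$ and hence $QN/N = PN/N$. This completes the uniqueness and shows $xN$ is picky in $G/N$.
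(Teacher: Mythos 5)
Your proof is correct and follows essentially the same route as the paper: both arguments reduce to showing $QN=PN$ whenever $x\in QN$, work with Sylow theory inside the subgroup $QN$, and use the key step of replacing the conjugating element $y=qn$ by its $N$-component so that pickiness identifies $Q^n$ with $P$. No issues.
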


\begin{proof}
Let $P\in{\rm Syl}_p(G)$ be the unique Sylow subgroup containing $x$. We will prove that $PN/N$ is the unique Sylow $p$-subgroup of $G/N$ containing $xN$. Suppose that $xN\in QN/N$ where $Q\in{\rm Syl}_p(G)$. Then $x\in QN$ and since $QN$ contains a Sylow $p$-subgroup of $G$, it follows that $P\subseteq QN$. Then $P$ and $Q$ are Sylow $p$-subgroups of $QN$ and hence, there exists $n\in N$ such that $P=Q^n$. We conclude that $PN=Q^nN=QN$, as wanted.
\end{proof}

We are ready to prove Theorem A.

\medskip

\begin{thm}\label{thmApodd}
Let $G$ be a finite $p$-solvable group, where $p$ is an odd prime, and $x \in G$ is a $p$-element which lies in a unique
Sylow $p$-subgroup $P$ of $G$. Then there is a bijection $$^*: {\rm Irr}^{x}(G) \rightarrow {\rm Irr}^{x}(\norm GP)$$
and a sign $\epsilon_x$ such that $\chi(x)=\epsilon_x \chi^*(x)$ and $\chi(1)_p=\chi^*(1)_p$ for all 
$\chi \in {\rm Irr}^{x}(G)$.
\end{thm}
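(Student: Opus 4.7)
The plan is to prove Theorem A by induction on $|G|$, using Theorem \ref{mainpodd} as the central reduction tool and Lemma \ref{xinv} to decompose ${\rm Irr}^x(G)$ into Clifford-type pieces indexed by $\langle x\rangle$-invariant characters of a suitable normal subgroup. If $P \trianglelefteq G$, then $\norm GP = G$ and the identity bijection (with $\epsilon_x = 1$) works trivially, so one may assume $P$ is not normal; in particular, $G$ has positive $p$-length $l \ge 1$.

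I would form the upper $p$-series $1 = L_0 \le L_1 \le \dots \le L_{2l} = G$, where $L_{2k-1}/L_{2k-2} = \oh{p'}{G/L_{2k-2}}$ and $L_{2k}/L_{2k-1} = \oh{p}{G/L_{2k-1}}$, and set $L = L_{2l-2}$ and $K = L_{2l-1}$. By construction $K/L$ is a $p'$-group and $G/K$ is a $p$-group, so $G = PK$ and thus $PK \trianglelefteq G$ trivially. Moreover $H := L\norm GP$ is properly contained in $G$: by construction $\oh{p}{G/L} = 1$, so $PL/L$ is not normal in $G/L$, hence $\norm{G/L}{PL/L} < G/L$, and the image of $\norm GP$ in $G/L$ sits inside this proper subgroup. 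Thus the hypotheses of Theorem \ref{mainpodd} are met with $H < G$.

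By Lemma \ref{xinv}, ${\rm Irr}^x(G) = \bigsqcup_{[\theta]} {\rm Irr}^x(G|\theta)$ where $[\theta]$ runs over $\norm GP$-orbits of $\langle x\rangle$-invariant $\theta \in \irr K$, and similarly ${\rm Irr}^x(H) = \bigsqcup_{[\varphi]} {\rm Irr}^x(H|\varphi)$ with $\varphi \in \irr C$ and $C = K \cap H$. For each $\theta$, letting $\varphi$ be its relative $\langle x\rangle$-Glauberman correspondent, Theorem \ref{mainpodd} gives a bijection $\irr{G|\theta} \to \irr{H|\varphi}$ with $\chi(x) = \epsilon_x \chi^*(x)$ and $\chi(1)_p = \chi^*(1)_p$; the first identity restricts the bijection to ${\rm Irr}^x(G|\theta) \to {\rm Irr}^x(H|\varphi)$. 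Assembling across orbits yields the desired bijection ${\rm Irr}^x(G) \to {\rm Irr}^x(H)$. Now $P \in {\rm Syl}_p(H)$ (since $P \le H$ and $|H|_p \le |G|_p = |P|$), $\norm HP = \norm GP$, and $x$ remains picky in $H$ by Lemma \ref{subg}. As $|H| < |G|$, induction applied to $H$ yields a bijection ${\rm Irr}^x(H) \to {\rm Irr}^x(\norm GP)$, and composition completes the proof.

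The main obstacle, as I see it, is ensuring that the sign $\epsilon_x$ produced by Theorem \ref{mainpodd} is consistent across distinct $\theta$-orbits and through the inductive step, so that one global sign suffices for all of ${\rm Irr}^x(G)$. Tracing $\epsilon_x$ back to the congruence $[\theta_C,\varphi] \equiv \epsilon_x \pmod p$ coming from the relative Glauberman correspondence, uniform behavior should follow from the picky hypothesis on $x$ together with the oddness of $p$, but pinning this down rigorously is the delicate piece of the gluing argument.
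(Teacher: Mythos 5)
Your overall architecture is the same as the paper's: induct on $|G|$, choose normal subgroups $L\le K$ with $K/L$ a $p'$-group and $PK\nor G$, decompose ${\rm Irr}^x(G)$ via Lemma \ref{xinv}, pass to $H=L\norm GP$ via Theorem \ref{mainpodd}, and finish by applying induction to $H$. The gap is in your choice of $K$ and $L$. You extract them from the \emph{ascending} series built from the radicals $\oh{p'}{\cdot}$ and $\oh{p}{\cdot}$, which places $K$ at the very top of that series, and you then argue that $H=L\norm GP<G$ because $\oh{p}{G/L}=1$ forces $PL/L$ to be non-normal in $G/L$. That implication fails when $PL/L$ is trivial, i.e.\ when $P\le L$, and this really happens. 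Take $G=S_4$ and $p=3$: the ascending series is $1<V_4<A_4<S_4$ with top factor $S_4/A_4$ a $3'$-group, so your recipe yields $L=A_4$ and $K=S_4$. Then $PL/L=1$ is normal in $G/L$, and $H=A_4\,\norm{S_4}{P}=A_4\cdot S_3=S_4=G$ (and also $K=G$), so both Theorem \ref{mainpodd} and the inductive step are vacuous: the argument makes no progress, even though $x=(123)$ is picky and $P$ is not normal. (A further slip: when your $p$-length parameter is $1$ you have $L=1$, and then $\oh{p}{G/L}=\oh{p}{G}$ need not be trivial, so even the premise of your non-normality claim can fail, e.g.\ for $G=C_3\times A_4$, $p=3$.) The paper avoids all of this by going from the top down: it sets $K=\Oh{p'p}{G}$ (a residual, not a radical) and takes $K/L$ to be a chief factor of $G$. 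With that choice $K/L$ is automatically a $p'$-group since $G$ is $p$-solvable, $KP=\Oh{p'}{G}\nor G$, and $H=G$ would give $LP\nor G$, hence $\Oh{p'}{G}\sbs LP$ and $K=L$, a contradiction; so $H<G$ is guaranteed and the induction genuinely reduces.

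A secondary point: the sign produced by Theorem \ref{mainpodd} is a priori the sign $\epsilon$ with $[\theta_C,\varphi]\equiv\epsilon \pmod p$, which depends on the orbit representative $\theta$, so assembling the orbitwise bijections into one with a single global sign $\epsilon_x$ requires an argument that this sign is independent of $\theta$ (and compatible with the sign $\delta_x$ coming from the inductive step on $H$). You correctly identify this as the delicate point but leave it unresolved; it is a genuine loose end in your write-up, though the structural failure above is the part that must be repaired first, and the repair is precisely to adopt the paper's descending choice of $K$ and $L$.
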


\begin{proof}
We argue by induction on $|G|$. If $P \nor G$, there is nothing to prove. 
Let $K=\Oh{p'p}{G}$ and let $K/L$ be a chief factor of $G$.  Notice that $KP\lhd G$ and then $G=K\norm G P$. Let $H=L\norm GP$ and notice that $KH=G$ and $K\cap H=C$, where $C/L=\cent{K/L}P$. Also notice that $H<G$, since otherwise $LP\lhd G$ and then $KP={\rm \textbf{O}}^{p'}(G)\subseteq LP$ and $K=L$, a contradiction.

By induction, there is a bijection
$f:{\rm Irr}^{x}(H) \rightarrow  {\rm Irr}^{x}(\norm GP)$ and a sign $\delta_x$
such that $\chi(x)=\delta_x f(\chi)(x)$ and $\chi(1)_p=f(\chi)(1)_p$ for all 
$\chi \in {\rm Irr}^{x}(L\norm GP)$.

Let $\Delta$ be a complete set of $H$-representatives of the $x$-invariant characters of $\irr K$.
Let $\Xi$ be consisting of all relative $\langle x\rangle$-Glauberman correspondents of $\theta$.
By Lemma \ref{xinv} we have that
 $${\rm Irr}^{x}(G)=\bigcup_{\theta \in \Delta} {\rm Irr}^{x}(G|\theta)$$ and 
  $${\rm Irr}^{x}(H)=\bigcup_{\varphi \in \Xi} {\rm Irr}^{x}(H|\varphi) \, $$
 where  ${\rm Irr}^{x}(G|\theta)={\rm Irr}^{x}(G) \cap \irr{G|\theta}$.
 The proof of the theorem is concluded by using Theorem \ref{mainpodd}.
\end{proof}

\medskip

In our proof of Theorem A, we used oddness in two key places. In Theorem \ref{anticfully}, we saw that
the sign associated to picky elements was universal (that is, not depending on the particular $p$-element 
in consideration).
Also, when we applied Turull's result, we needed that $p$ is odd, in order to control certain character triple isomorphisms.
The universal sign is not going to hold when $p=2$.  The smallest example is associated with ${\rm Sp}(6,3)$.
Indeed, suppose that $K$ is an extraspecial 3-group of order $3^7$ and exponent 3. Let $L=\zent K$. 
Suppose that $1 \ne \varphi \in \irr Z$ and write $\varphi^K=e\theta$, where $e=27$ and $\theta \in \irr K$.
Let $H={\rm Sp}(6,3)$ and let $\Psi$ be any one of the two Weil characters of degree 27 of $H$. 
It can be shown that there is a 2-group $A={\sf C}_4 \times {\sf C}_4 \le H$ having elements $x, y \in A$
of order 4 such that $\cent{K}x=\cent Ky=L$ and $\Psi(x)=1$ and $\Psi(y)=-1$.  Let $G=KA$ be the semidirect product,
where $[A,L]=1$, and let $H=LA$. 
Let $\hat\theta \in \irr G$ be the canonical extension of $\theta$ to $G$.  Using the theory in \cite{Is73},
or using GAP, one can check that $\hat\theta(x)=\Psi(x)=1$ and $\hat\theta(y)=\Psi(y)=-1$. Hence, the sign $\epsilon$  in Theorem \ref{anticfully} changes for distinct elements, even if they have the same order.  When $p$ is odd, however,
 it can be shown that if $x$ is a $p$-element in $H$ with $\cent{K/L}x=1$ and $(|K/L|,p)=1$, then $\Psi(x)=\epsilon$, where $\epsilon$ is the only sign such that $|K:L|^{1/2} \equiv \epsilon$ mod $p$.
 This is of course consistent with Theorem \ref{anticfully}.
Moreover, if $o(x)=2$, then $\Psi(x)=\epsilon$ where $\epsilon \equiv |K:L|^{1/2} $ mod 4, and this again
can be used to prove Theorem A for
elements of order 2.  At the time of this writing, our techniques do not allow us to prove Theorem A for 2-elements of greater order.

\subsection*{Acknowledgements} We thank Gunter Malle for useful conversations on this subject and for his careful reading of this manuscript.


\begin{thebibliography}{99}

 %\bibitem{FLO}
%G. De Franceschi, M. W. Liebeck, E.A. O'Brien, {\it Conjugacy %in finite classical groups}, https://doi.org/10.48550/arXiv.%2401.07557.

 


\bibitem{Is73}
 I. M. Isaacs, Characters of solvable and symplectic groups. \emph{Amer.
  J. Math. \bf95} (1973), 594--635.
  

 

 \bibitem{Is76}
I. M. Isaacs, {\it Character Theory of Finite Groups}.
Academic Press,  New York-London, 1976. 
  

 
\bibitem{M}
G. Malle,  Picky elements, subnormalizers, and
character correspondences. 	arXiv:2503.10425. 

\bibitem{MMM}
A. Mar\'oti, J. Mart\'{\i}nez Madrid and A. Moret\'o, Covering the set of $p$-elements in finite groups by Sylow $p$-subgroups. \emph{J. Algebra {\bf 638}} (2024), 840--861.

\bibitem{Ma}
J. Mart\'{\i}nez Madrid, Picky and subnormalizer conjectures in symmetric groups. In preparation.

\bibitem{MR}
A. Moret\'o and N. Rizo, Local representation theory, picky elements and subnormalizers. In preparation. 
 

   \bibitem{N18}
 G. Navarro, \emph{Character theory and the McKay conjecture}. Cambridge University Press, Cambridge, 2018.

\bibitem{NTV}
G. Navarro, P. H. Tiep and C. Vallejo, Brauer correspondent blocks with one simple module.
\emph{Trans. Amer. Math. Soc. {\bf 371}} (2019), 903--922.

\bibitem{T08}
A. Turull, Above the Glauberman correspondence, 
\emph{Adv. Math. {\bf 217}} (2008), 2170--2205.


\bibitem{W79}
T. R. Wolf,  Character correspondences induced by subgroups
of operator groups, 
{\sl J. Algebra.} {\bf 57} (1979),  502--521. 
 

 


 

\end{thebibliography}
\end{document}